\DeclareMathOperator*{\argmin}{arg\,min}
\DeclareMathOperator*{\diag}{\,diag}
\begin{document}
\title[PinT methods for American Options]{Parallel-in-Time Iterative Methods for\\Pricing American Options}
\author[X.-M. Gu, et al.]{Xian-Ming Gu, Jun Liu$^{\ast}$, Cornelis W. Oosterlee}  
\address{$^{\ast}$Corresponding author.}
\address{X.-M. Gu \newline
School of Mathematics,
Southwestern University of Finance and Economics,
Chengdu 611130, P.R. China; Bernoulli Institute for Mathematics, Computer Science and Artificial Intelligence, University of Groningen, Nijenborgh 9, P.O. Box 407, 9700 AK Groningen, the Netherlands}
\email{guxianming@live.cn}
\address{J. Liu \newline
Department of Mathematics and Statistics, Southern Illinois University Edwardsville, Edwardsville, IL 62026, USA}
\email{juliu@siue.edu}
\address{C.W. Oosterlee \newline
	Mathematical Institute, Utrecht University, 3584 CS Utrecht, The Netherlands}
\email{c.w.oosterlee@uu.nl}


\thanks{Submitted on \today.}

\subjclass[2000]{35A07, 35Q53}
\keywords{American options; linear complementarity problems; parallel-in-time method; policy iteration}

\begin{abstract}
For pricing American options, 
a sequence of discrete linear complementarity problems (LCPs) or equivalently Hamilton-Jacobi-Bellman (HJB) equations need to be solved in a sequential time-stepping manner. In each time step, the policy iteration or its penalty variant is often applied due to their fast convergence rates. In this paper, we aim to solve for all time steps simultaneously, by applying the policy iteration to an ``all-at-once form" of the HJB equations, where two different parallel-in-time preconditioners are proposed to accelerate the solution of the linear systems within the policy iteration. Our proposed methods are generally applicable for such all-at-once forms of the HJB equation, arising from  option pricing problems with optimal stopping and nontrivial underlying asset models. Numerical examples are presented to show the feasibility and robust convergence behavior of the proposed methodology.
\end{abstract}

\maketitle
\numberwithin{equation}{section}
\newtheorem{theorem}{Theorem}[section]
\newtheorem{lemma}[theorem]{Lemma}
\newtheorem{proposition}[theorem]{Proposition}
\newtheorem{corollary}[theorem]{Corollary}
\newtheorem*{remark}{Remark}

\section{Introduction}
Pricing American-style derivatives has attracted a lot of interest in the academia in the last few decades 
and is of practical importance to the financial industry.
An American option is a financial instrument that gives its buyer the right, but not the obligation, to buy (or sell) an asset at a predetermined price at any time up to a certain terminal time. This  early exercise right, compared with a European option, casts the American option pricing problem into the following nonlinear and comparably intriguing linear complementarity problem (LCP) \cite{Wilmott1993},
\begin{equation}
\begin{cases}
\mathcal{L}V({\bm x},t) \geq 0,\\
V({\bm x},t) \geq V^{*}({\bm x}),\\
\mathcal{L}V({\bm x},t)\cdot\left[V({\bm x},t) - V^{*}({\bm x})\right] = 0,
\end{cases}
\label{eq1.1}
\end{equation}
for $({\bm x},t)\in\mathbb{R}^{d}_{+}\times[0,T)$ $(d = 1,2, \ldots)$; with the terminal condition $V({\bm x},T) = V^{*}({\bm x})$ denoting the payoff function of the option (e.g.,  $\max\{K-{\bm x},0\}$ for a put option). ${\bm x}$ and $\mathcal{L}$ are, respectively,  the independent variables in space and the linear differential operator originating from the model assumptions for the underlying price, e.g. the  Black-Scholes (B-S) operator \cite{Black73,Merton73}, two-asset B-S operator \cite{heidarpour2018}, or the Heston stochastic volatility model \cite{Haentjens15,Le2012}, in this study. The proposed methods can be applied to other option pricing models after appropriate modification.

In practical computations, we need to truncate the computational domain $\Omega\subset\mathbb{R}^d$ and add suitable initial-boundary conditions. Three, out of many, methodologies are prominent for pricing American options: regression Monte Carlo simulation, see~\cite{broadie1996american,longstaff2001valuing}, numerical integration and transform methods, and numerical solution of partial differential equations (PDEs). Each method has distinct advantages and limitations. Within this landscape, the finite difference method and Monte Carlo simulation emerge as key contenders. The former is intuitive and offers a systematic approach to  numerically solving the option pricing equations, while the latter employs randomized sampling for scenario generation, backward regression techniques under optimal stopping scenarios and can easily be generalized to high-dimensional settings. 

In the PDE context, options with early-exercise features give rise to free boundary PDE problems. Traditional methods such as lattice-based binomial tree models remain reliable for Bermudan and American options~\cite{borovkova2012american}, grounded in fundamental financial principles but their generalisation regarding asset dynamics is nontrivial, see \cite{vellekoop2009tree}. Numerical PDE solutions for American options~\cite{forsyth2007numerical,ikonen2008efficient,hout2016application} have been developed for different formulations of the free boundary problem, like for LCPs~\cite{oosterlee2003multigrid}, parabolic variational inequalities~\cite{jaillet1990variational}, or penalty methods~\cite{zvan1998penalty,forsyth2002quadratic} and policy iteration methods~\cite{Reisinger12,arregui2017pde}. Recent advancements in transform methods for early-exercise options include Fourier-based techniques, for example, the COS method~\cite{fang2009pricing}, leveraging Fourier-cosine series expansions for efficient computations and wavelet expansion methods, like the Shannon Wavelet Inverse Fourier Technique (SWIFT)~\cite{maree2017pricing}, complementing traditional approaches. In this paper, we will develop fast iterative solvers for the well-known PDE models.

For efficient numerical treatment, the above LCP (\ref{eq1.1}) is often interpreted as the following Hamilton-Jacobi-Bellman (HJB) equation (also called the obstable problem in literature)
\begin{equation}
	\begin{cases}
		\min\left\{\partial_t u - \mathcal{\widehat{L}} u, u - \phi(s)\right\} = 0, & t\in (0,T],\\
		u(s,0) = \phi(s),~s\in\Omega~{\rm(1D)~or}~u(s,v,0)=\phi(s),~(s,v)\in\Omega~({\rm 2D}),\\
		{\rm boundary~conditions~(BCs)},
	\end{cases}
	\label{eq1.4}
\end{equation}
where $u(s, t)$ or $u(s,v,t)$ is the value of an American put option with striking price $K$, i.e., the holder can receive a given pay-off function
$\phi(s)$ at the expiry date $T$. Moreover, a time-reverse transformation $u(s,t) = V(s,T - t)$ or $u(s,v,t) = V(s,v,T - t)$ is also utilized in the derivation of the above HJB equation, where the operator $\mathcal{\widehat{L}}$ corresponding to the operator $\mathcal{L}$ in Eq. (\ref{eq1.1}).

In the following, put options are considered. Their price at the expiry is given by the pay-off function $\phi(s) = \max\{K - s, 0\}$. As the equations are solved backward in time, this leads to the initial condition
\begin{equation}
u(s,0) = \phi(s)~~{\rm and}~~u(s,v,0) = \phi(s)
\end{equation}
for 1D and 2D models, respectively. In order to compute an approximate solution, the infinite domain is truncated at $s = s_{max}$ and $v = v_{max}$, where $s_{max}$ and are sufficiently large so that the error due to the truncation is negligible. For the detail information of BCs, we will describe it in the specific examples involving the above PDE models (cf. Section \ref{sec4}). 

With  the  advent  of  massively  parallel  processors, parallelizable numerical methods for solving the HJB equations (\ref{eq1.4}) become 
increasingly meaningful, but there are still few studies  on this topic \cite{Clevenhaus22}. More precisely, instead of solving such LCPs via time-stepping in a sequential manner, we propose a novel numerical treatment named the parallel-in-time (PinT\footnote{We refer to  the  website  \url{http://parallel-in-time.org} for  various  PinT  algorithms  and applications.}) method that solves for all unknowns simultaneously by constructing a large LCP which is composed of smaller LCPs at each time level. More specifically, we will exploit the policy iteration to solve this large all-at-once LCP and each iteration requires to solve a linear system with a much higher computational cost. In order to alleviate this high workload, we design a kind of parallel preconditioning techniques for accelerating a Krylov subspace solver to the above linear system. By making  use  of  the  obtained (stable) diagonalization and the fast Fourier transform (FFT) for factorizing the involved $\alpha$-circulant matrices, for any input  vector ${\bm r}$,  we  can  compute  the  preconditioning  step $\left[\mathcal{P}_\alpha^{(k)}\right]^{-1}{\bm r}$ very efficiently via the well-studied three-step diagonalization technique \cite{lin2021all,gander21paradiag}, which yields an effective parallel implementation across all time grid points. Interestingly, we numerically observe that the number of all-at-once policy iteration is independent of the number of coupled time steps. We were not able to fully justify this observation yet, but it seems to be related with a fascinating conclusion \cite[Corollary 2.6]{Reisinger12} that the overall number of policy iterations in a typical time-stepping scheme is bound by $(N_t+N_s)$ when the solution from the
previous time step is used as the initial value for current time step. Our all-at-once policy iteration essentially marches $N_t$ different time steps forward simultaneously (see Figure \ref{FigEx1_IndexPattern}) and hence its convergence rate only depends on $N_s$.

The reminder of this paper is organized as follows. In Section \ref{sec2}, we present the semi-discrezation of HJB equations and construct the time-stepping method using the policy iteration for discretized HJBs. Section \ref{sec3} introduce the all-at-once form of HJBs and then solve it via combining the policy iteration and the PinT preconditioned Krylov subspace method. The construction of the PinT preconditioning has been described in details. In Section \ref{sec4}, numerical experiments involving three American option pricing models are reported to show the effectiveness of our proposed method. Finally, some conclusions are drawn in the last Section \ref{sec5}.
\section{Semi-discretized HJB equations}
\label{sec2}
In this section, we consider to compute the price (or value) of an American put option governed via the HJB equation \ref{eq1.4} (see \cite{Pham1997} for the existence and uniqueness results in the viocosity solution framework).
At this stage, we define $t_n = n\tau$ with $\tau = T/N_t$, where $N_t$ is a positive integer. Let $L_h\in \mathbb{R}^{N_s\times N_s}$ be the spatial discretization matrix of $\mathcal{\widehat{L}}$, via a standard finite difference method (or finite element method).
Then we obtain the semi-discretized system of the following form
\begin{equation}
\begin{cases}
\min\left(\frac{d{\bm u}(t)}{dt} - L_h{\bm u}(t) - {\bm g}, {\bm u}(t) - {\bm \phi}\right) = 0,\\
{\bm u}(0) =  {\bm \phi},
\end{cases}
\end{equation}
where ${\bm u}(t)\in\mathbb{R}^{N_s}$ is a lexicographically ordered vector collecting the approximate solution of $u(\cdot, t)$ over all the spatial grid points. the vector ${\bm g}$ is obtained by discretizing the suitable boundary conditions of the model \eqref{eq1.4}. Moreover, for ${\bm x}, {\bm y}\in\mathbb{R}^{N_s}$ the notation $\min\{{\bm x},{\bm y}\}$ denotes the vector of components $\min\{x_i,y_i\}$. Let ${\bm u}^n={\bm u}(t_n)$, we use the first-order accurate implicit Euler scheme to discretize the above problem as follows (with ${\bm u}^0={\bm \phi}$ from the initial condition):
\begin{equation}
\min\left(\frac{{\bm u}^n - {\bm u}^{n-1}}{\tau} - L_h{\bm u}^n - {\bm g}, {\bm u}^{n} - {\bm \phi}\right) = 0,\quad n = 1,2,\cdots,N_t,
\label{eq2.3}
\end{equation}
which is usually solved by the so-called policy iteration (or Howard's algorithm) in each time level $n = 1,2,\cdots,N_t$:
\begin{equation}
\min\left\{\left(\frac{1}{\tau}I_s - L_h\right){\bm u}^n - \left(\frac{1}{\tau}{\bm u}^{n-1} + {\bm g}\right), {\bm u}^{n} - {\bm \phi}\right\} = {\bm 0},
\label{eq2.4}
\end{equation}
where $I_s$ is an identity matrix of order $N_s$. Clearly, this time stepping algorithm is sequential.

For convenience, we now briefly recall the standard policy iteration for solving the following discrete HJB equation or obstacle problem
\begin{equation}
\min\{A{\bm x} - {\bm b}, {\bm x} - {\bm c}\} = {\bm 0},
\end{equation}
which has a unique solution if and only if $A$ is a P-matrix (i.e., every principal minor is positive). 
In $k$-th policy iteration with a given policy ${\bm x}^{k-1}$, one needs to solve
for the updated policy ${\bm x}^{k}$ from the following sequence of linear system
\begin{equation} \label{policyAk}
	A^{(k)} {\bm x}^{k}={\bm b}^{(k)},
\end{equation}
where
\begin{equation}
	A^{(k)}:=I+\Phi^{(k)}(A-I),\qquad {\bm b}^{(k)}:= {\bm c}+\Phi^{(k)}({\bm b}-{\bm c}),
\end{equation}
with the 0--1 diagonal matrix $\Phi^{(k)}$ given by
\begin{equation}
	\Phi^{(k)}_{i,i}:=
	\begin{cases}
		1,  & \text{if}\ {(A{\bm x}^{k-1}-{\bm b})_i\le ({\bm x}^{k-1}-{\bm c})_i},\\
		0, &  \text{otherwise.}
	\end{cases}
\label{eq2.8}
\end{equation}
The detail convergence analysis of such a policy iteration for the above sequence of HJB (or obstacle) problems has been discussed in \cite{Bokanowski09,Reisinger12}, with the assumption of $A$ being a nonsingular M-matrix (i.e. $A^{-1}\ge 0$).
In the literature, the sequence of linear systems (\ref{policyAk}) is typically solved by a sparse direct solver (such as backslash solver in MATLAB), which can become prohibitively expensive when the system size gets large in multidimensional cases. 
In this paper, we will propose efficient PinT preconditioners for solving (\ref{policyAk})  by Krylov subspace methods, such as preconditioned GMRES.
We point out that the discretized matrix $A$ is sparse and highly structured, 
but its structure is heavily distorted in $A^{(k)}$, which leads to some essential difficulty in developement of fast iterative solvers.
%
\section{Parallel-in-time policy iteration}
\label{sec3}
\subsection{All-at-once form of LCPs}
In this section, we explore a possibility for PinT computation of (\ref{eq2.3}) by using a modified diagonalization technique, which is originally proposed by Maday and R{\o}nquist \cite{Maday2008}. The idea can be briefly introduced as follows. For (\ref{eq2.4}), we always solve the following sequence of obstacle problems:
\begin{equation}
\begin{cases}
\min\left\{\left(\frac{1}{\tau}I_s - L_h\right){\bm u}^1 - \left(\frac{1}{\tau}{\bm u}^{0} + {\bm g}\right), {\bm u}^{1} - {\bm \phi}\right\} = {\bm 0},\\
\min\left\{\left(\frac{1}{\tau}I_s - L_h\right){\bm u}^2 - \left(\frac{1}{\tau}{\bm u}^{1} + {\bm g}\right), {\bm u}^{2} - {\bm \phi}\right\} = {\bm 0},\\
\qquad\qquad\qquad\qquad\qquad \vdots\\
\min\left\{\left(\frac{1}{\tau}I_s - L_h\right){\bm u}^{N_t} - \left(\frac{1}{\tau}{\bm u}^{N_t-1} + {\bm g}\right), {\bm u}^{N_t} - {\bm \phi}\right\} = {\bm 0},
\end{cases}
\label{eq3.1x}
\end{equation}
in the one-by-one mode. However, instead of computing the solutions of Eq. (\ref{eq3.1x}) step-by-step, we try to get the solutions simultaneously by solving the following large-scale obstacle problem:
\begin{equation}\small
\min\left\{
\underbrace{
\begin{bmatrix}
\frac{1}{\tau}I_s - L_h \\
-\frac{1}{\tau}I_s & \frac{1}{\tau}I_s - L_h\\
& \ddots & \ddots\\
&& -\frac{1}{\tau}I_s & \frac{1}{\tau}I_s - L_h
\end{bmatrix}
}_{\mathcal{M}}
\underbrace{
\begin{bmatrix}
{\bm u}^1\\
{\bm u}^2\\
\vdots\\
{\bm u}^{N_t}
\end{bmatrix}
}_{\bm v}
-
\underbrace{
\begin{bmatrix}
\frac{1}{\tau}{\bm u}^0 + {\bm g}\\
	{\bm g}\\
	\vdots\\
	{\bm g}
\end{bmatrix}
}_{\bm f},
\underbrace{
\begin{bmatrix}
	{\bm u}^1\\
	{\bm u}^2\\
	\vdots\\
	{\bm u}^{N_t}
\end{bmatrix}
}_{\bm v}-
\underbrace{
\begin{bmatrix}
	{\bm \phi}\\
	{\bm \phi}\\
	\vdots\\
	{\bm \phi}
\end{bmatrix}
}_{\bm \Phi}
\right\} =
\underbrace{
\begin{bmatrix}
	{\bm 0}\\
	{\bm 0}\\
	\vdots\\
	{\bm 0}
\end{bmatrix}
}_{\bm 0}.
\label{eq3.1}
\end{equation}
For clarity, we have introduced the following notations
\begin{equation}
\mathcal{M}=\begin{bmatrix}
\frac{1}{\tau}I_s - L_h \\
-\frac{1}{\tau}I_s & \frac{1}{\tau}I_s - L_h\\
& \ddots & \ddots\\
&& -\frac{1}{\tau}I_s & \frac{1}{\tau}I_s - L_h
\end{bmatrix},\quad
{\bm v} =
\begin{bmatrix}
	{\bm u}^1\\
{\bm u}^2\\
\vdots\\
{\bm u}^{N_t}
\end{bmatrix},~~
{\bm f} = \begin{bmatrix}
\frac{1}{\tau}{\bm u}^0 + {\bm g}\\
{\bm g}\\
\vdots\\
{\bm g}
\end{bmatrix},~~
{\bm \Phi} =
\begin{bmatrix}
	{\bm \phi}\\
{\bm \phi}\\
\vdots\\
{\bm \phi}
\end{bmatrix},
\label{eq3.2}
\end{equation}
then we reformulate the above problem (\ref{eq3.1}) in all-at-once form:
\begin{equation}
	\min\left(\mathcal{M}{\bm v} - {\bm f}, {\bm v} - {\bm \Phi}\right) = {\bm 0},~~\mathcal{M} = B\otimes I_s -  I_t\otimes L_h,
	\label{eq3.3}
\end{equation}
where the matrix $B = \frac{1}{\tau}tridiag(-1,1,0)\in\mathbb{R}^{N_t\times N_t}$
and $I_t$ is an identity matrix of order $N_t$.

Before we solve the resulting all-at-once problem (\ref{eq3.1}), the following equivalence property between the solutions of (\ref{eq3.1}) and \eqref{eq2.3} can be shown, which lays the foundation of our approach.
\begin{proposition}
The vectors ${\bm u}^1, {\bm u}^2, \cdots, {\bm u}^{N_t}$ are the solutions of obstacle problems (\ref{eq2.4}) if and only if the vector ${\bm v}$ in  \eqref{eq3.2} is the solution of the all-at-once problem \eqref{eq3.1}.
\end{proposition}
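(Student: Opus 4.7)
The plan is to verify the equivalence by a direct block-wise unrolling of the all-at-once system \eqref{eq3.1}, exploiting two key structural facts: (i) the matrix $\mathcal{M}$ is block lower bidiagonal with blocks of size $N_s$, so its action on ${\bm v}$ couples only consecutive time levels; and (ii) the vector $\min\{\cdot,\cdot\}$ used in \eqref{eq3.1} is defined component-wise, so the large minimum equation decouples row-by-row with no extra interaction introduced by the min operation itself. Both implications will then follow from a single computation that simply reads off the $n$-th block row of \eqref{eq3.1}.

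First, I would write the $n$-th block of the residual $\mathcal{M}{\bm v}-{\bm f}$ explicitly. For $n=1$, using the first block row of $\mathcal{M}$ and the first block of ${\bm f}$, this is $\bigl(\tfrac{1}{\tau}I_s-L_h\bigr){\bm u}^1-\bigl(\tfrac{1}{\tau}{\bm u}^0+{\bm g}\bigr)$, with ${\bm u}^0={\bm \phi}$ from the initial condition. For $n=2,\ldots,N_t$, the subdiagonal block $-\tfrac{1}{\tau}I_s$ together with the diagonal block $\tfrac{1}{\tau}I_s-L_h$ and the constant block ${\bm g}$ of ${\bm f}$ yields $\bigl(\tfrac{1}{\tau}I_s-L_h\bigr){\bm u}^n-\tfrac{1}{\tau}{\bm u}^{n-1}-{\bm g}$. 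In both cases this matches precisely the first argument of the min in \eqref{eq2.4}. Likewise, the $n$-th block of ${\bm v}-{\bm \Phi}$ is ${\bm u}^n-{\bm \phi}$, which is the second argument in \eqref{eq2.4}.

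Combining these computations with the component-wise definition of the vector $\min$, the $n$-th block row of \eqref{eq3.1} reads
\begin{equation*}
\min\left\{\bigl(\tfrac{1}{\tau}I_s-L_h\bigr){\bm u}^n-\bigl(\tfrac{1}{\tau}{\bm u}^{n-1}+{\bm g}\bigr),\ {\bm u}^n-{\bm \phi}\right\}={\bm 0},
\end{equation*}
which is exactly \eqref{eq2.4}. Since this identification holds for every $n=1,\ldots,N_t$ and no other coupling is present, the $N_t$ block equations of \eqref{eq3.1} and the $N_t$ equations \eqref{eq2.4} are the same system written in two different layouts, giving both directions of the equivalence simultaneously.

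There is no real obstacle to the argument; it is purely an algebraic verification. The only point requiring a small comment is that ${\bm u}^0={\bm \phi}$ is baked into the first entry of ${\bm f}$ (rather than appearing as an unknown in ${\bm v}$), so the sequential recursion \eqref{eq2.4} is consistent with the all-at-once formulation in its handling of the initial datum. I would state this explicitly at the beginning of the proof to avoid any ambiguity, after which the block-row computation above yields the claim.
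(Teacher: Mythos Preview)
Your proposal is correct and follows essentially the same approach as the paper: both rely on the component-wise definition of $\min$ together with the block lower bidiagonal structure of $\mathcal{M}$ to identify the $n$-th block row of \eqref{eq3.1} with the $n$-th obstacle problem \eqref{eq2.4}. Your version is simply more explicit in writing out the block residual and in noting how the initial datum ${\bm u}^0={\bm\phi}$ is absorbed into ${\bm f}$.
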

\begin{proof} First of all, when the vectors ${\bm u}^1, {\bm u}^2, \cdots, {\bm u}^{N_t}$ are the solutions of obstacle problems (\ref{eq2.4}), it means that \eqref{eq2.4} holds for $n = 1,2,\cdots,N_t$. According to the definition of $\min\{{\bm x}, {\bm y}\}$ involving two vectors ${\bm x}$ and ${\bm y}$, we collect them as an equivalent form \eqref{eq3.1}. On the other hand, if the vector ${\bm U}$ in \eqref{eq3.2} is the solution of the all-at-once problem \eqref{eq3.1}, we can equivalently decouple it as a sequence of obstacle problems (\ref{eq2.3}) by using the component-wise definition of ``$\min$".
\end{proof}
Since the convergence of our used policy iteration depends on the coefficient matrix being an $M$-matrix, the following property shows the all-at-once large coefficient matrix is still an $M$-matrix.
\begin{proposition}
If $-L_h$ is an $M$-matrix, then the matrix $\mathcal{M}$ is also a nonsingular $M$-matrix.
\end{proposition}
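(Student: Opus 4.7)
The plan is to verify the two defining properties of a nonsingular $M$-matrix for $\mathcal{M}$: that it is a $Z$-matrix (all off-diagonal entries nonpositive) and that its inverse is entrywise nonnegative. Both follow quite directly from the block lower bidiagonal structure of $\mathcal{M}$ inherited from $B=\frac{1}{\tau}\mathrm{tridiag}(-1,1,0)$.

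For the sign pattern, I would read off the block structure directly. Since $-L_h$ is an $M$-matrix, its off-diagonals are nonpositive and its diagonal is positive; equivalently, $L_h$ has nonnegative off-diagonals and negative diagonal entries. Consequently each diagonal block $D:=\frac{1}{\tau}I_s-L_h$ has strictly positive diagonal and nonpositive off-diagonal entries, while the subdiagonal blocks $-\frac{1}{\tau}I_s$ contribute only nonpositive entries. This gives the $Z$-matrix property for $\mathcal{M}$.

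Next, writing $-L_h = sI_s-P$ with $P\ge 0$ and $s>\rho(P)$ (the strict inequality coming from the nonsingularity of $-L_h$), I would observe that $D = (s+\tfrac{1}{\tau})I_s-P$ with $s+\tfrac{1}{\tau}>\rho(P)$, so that $D$ is itself a nonsingular $M$-matrix and $D^{-1}\ge 0$. Since $\mathcal{M}$ is block lower bidiagonal with all diagonal blocks equal to the invertible $D$, it is globally invertible, and block forward substitution in $\mathcal{M}\mathbf{v}=\mathbf{f}$ yields the closed form
\[
\mathbf{v}^n \;=\; \sum_{k=1}^{n} \tau^{k-n}\, D^{-(n-k+1)}\,\mathbf{f}^k, \qquad n=1,\ldots,N_t.
\]
Thus each nonzero block of $\mathcal{M}^{-1}$ is a positive multiple of a positive integer power of the nonnegative matrix $D^{-1}$, and is therefore nonnegative. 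Combined with the $Z$-matrix property, this shows $\mathcal{M}^{-1}\ge 0$ and hence $\mathcal{M}$ is a nonsingular $M$-matrix.

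The only step that requires any care is the preservation of the $M$-matrix property under the positive diagonal shift $\frac{1}{\tau}I_s$ applied to $-L_h$; beyond that, the argument is essentially bookkeeping on the lower triangular Kronecker-sum structure of $\mathcal{M}$, so I do not anticipate any real obstacle apart from keeping the sign conventions for $L_h$ versus $-L_h$ straight.
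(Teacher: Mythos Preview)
Your argument is correct, but it proceeds along a different route than the paper. Both proofs begin identically by verifying that $\mathcal{M}$ is a $Z$-matrix via the block lower bidiagonal structure. After that, the paper invokes the eigenvalue characterization of nonsingular $M$-matrices: since $\mathcal{M}$ is block lower triangular, its eigenvalues coincide with those of the diagonal blocks $\tfrac{1}{\tau}I_s-L_h$, and these have positive real parts because $-L_h$ is an $M$-matrix shifted by a positive multiple of the identity; a $Z$-matrix whose eigenvalues all have positive real part is a nonsingular $M$-matrix, and the proof is done in two lines. You instead use the inverse-positivity characterization, explicitly writing the blocks of $\mathcal{M}^{-1}$ via forward substitution as positive scalar multiples of powers of $D^{-1}\ge 0$. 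Your approach is more constructive and gives the inverse explicitly, which could be useful elsewhere; the paper's approach is shorter and avoids computing anything beyond the spectrum of the diagonal blocks. A minor remark: your parenthetical about ``the strict inequality coming from the nonsingularity of $-L_h$'' is not actually needed, since the shift by $\tfrac{1}{\tau}>0$ already guarantees $s+\tfrac{1}{\tau}>\rho(P)$ regardless of whether $-L_h$ is singular.
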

\begin{proof}
Since $-L_h$ is an $M$-matrix, so does the matrix $\frac{1}{\tau}I - L_h$ and then $\mathcal{M}$ is a $Z$-matrix \cite{Berman94}. On the other hand, all the eigenvalues of $\mathcal{M}$ are equal to those of the matrices $\frac{1}{\tau}I - L_h$ on the diagonal. Meanwhile, since every eigenvalue of the matrix $\frac{1}{\tau}I - L_h$ has positive real part, which implies that all the eigenvalues of the matrix $\mathcal{M}$ have  positive real part. With all that said, we can conclude that $\mathcal{M}$ is a nonsingular $M$-matrix.
\end{proof}
\begin{remark}
There are many discretizations for the spatial operator $\mathcal{L}$ in Eq. (\ref{eq1.4}) and the corresponding discrete matrix $-L_h$ is indeed an $M$-matrix, refer e.g., to \cite{Cen2010,Shen2023,Le2012,Hackbusch17,itkin2017} for details.
\end{remark}

To apply the policy iteration for solving (\ref{eq3.2}), we need to solve the following linear system
\begin{equation}
\mathcal{M}^{(k)}{\bm v}^{(k)} = {\bm f}^{(k)},
\label{eq3.4}
\end{equation}
where
\begin{equation}
\mathcal{M}^{(k)} = \mathcal{I} + \Theta^{(k)}(\mathcal{M} - \mathcal{I}),~~
\mathcal{I} = I_t\otimes I_s,~~{\bm f}^{(k)} = {\bm \Phi} + \Theta^{(k)}({\bm f} - {\bm \Phi})
\end{equation}
and $\Theta^{(k)}$ is a (block) diagonal matrix whose diagonal elements are 1 or 0, cf. the definition in Eq. \eqref{eq2.8}. Moreover, it noted that the coefficient matrix $\mathcal{M}^{(k)}$ is a block lower bidiagonal matrix, then if a Gaussian elimination based block forward substitution method \cite{golub2013matrix} is used to solve Eq. \eqref{eq3.4}, the computational complexity is of order $\mathcal{O}(N_tN^{3}_s + N_t N^{2}_s)$. This is still expensive, especially for high-dimensional models with a fine mesh. Considering the special sparse structure of $\mathcal{M}^{(k)}$, some iterative solvers (e.g., Krylov subspace methods) which only depend on the coefficient matrix-vector products at each iteration are preferable for Eq. \eqref{eq3.4}, see e.g. \cite{Mrkaic02,saad2003iterative}. However, without the use of preconditioning the Krylov subspace solver for Eq. \eqref{eq3.4} may not converge, or may take many iterations to return an acceptably accurate solution. In the best case, preconditioned Krylov iterative methods can substantially reduce the storage and operation costs as opposed to direct methods, while achieving greatly improved convergence rates that may often be proved a priori.
%
%
%
\subsection{PinT solvers}
In this subsection, we will design the PinT preconditioning techniques for accelerating the policy iteration to all-at-once problem (\ref{eq3.3}). In fact, the main difficulty of designing such preconditioners is how to well approximate a sequence of block diagonal matrices $\Theta^{(k)}$, which are comprised of both the time and space discretizations (especially the latter), it forbids us to use a unified framework to realize this target but we still can describe the following specific ideas for 1D and 2D models, respectively.
\subsubsection{1D model: nearest Kronecker product approximation.}
At this stage, we would like to design an effective preconditioning technique for accelerating Krylov subspace solvers applied to Eq. \eqref{eq3.4}. We first write $\Theta^{(k)}$ into a block diagonal form of $N_t$ blocks
\begin{equation}
	\Theta^{(k)}=\mathrm{blkdiag}\left(\Theta_1^{(k)},\Theta_2^{(k)},\cdots,\Theta_{N_t}^{(k)}\right).
\end{equation}

Based on the need of PinT solver, we desire to have a Kronecker product approximation
\begin{equation} \label{KronTheta}
	\Theta^{(k)}\approx I_t\otimes \Psi^{(k)},
\end{equation}
where the choice of identity matrix $I_t$ is due to $\Theta^{(k)}$ being a 0--1 diagonal matrix and $\Psi^{(k)}$ will be determined as below. To motivate the above Kronecker product approximation (\ref{KronTheta}), in Figure \ref{FigEx1_IndexPattern} we plot the nonzero pattern of the following temporally reshaped matrix: 
\begin{equation} \label{HatTheta}
\texttt{Tmat}(\Theta^{(k)}):=\left[\diag(\Theta^{(k)}_1),\diag(\Theta^{(k)}_2),\cdots,\diag(\Theta^{(k)}_{N_t})\right],
\end{equation}
where $\diag(\Theta^{(k)}_j)$ denotes a column vector of the main diagonal elements of $\Theta^{(k)}_j$. 
\begin{figure}[t]
	\begin{center} 
		\includegraphics[width=1\textwidth]{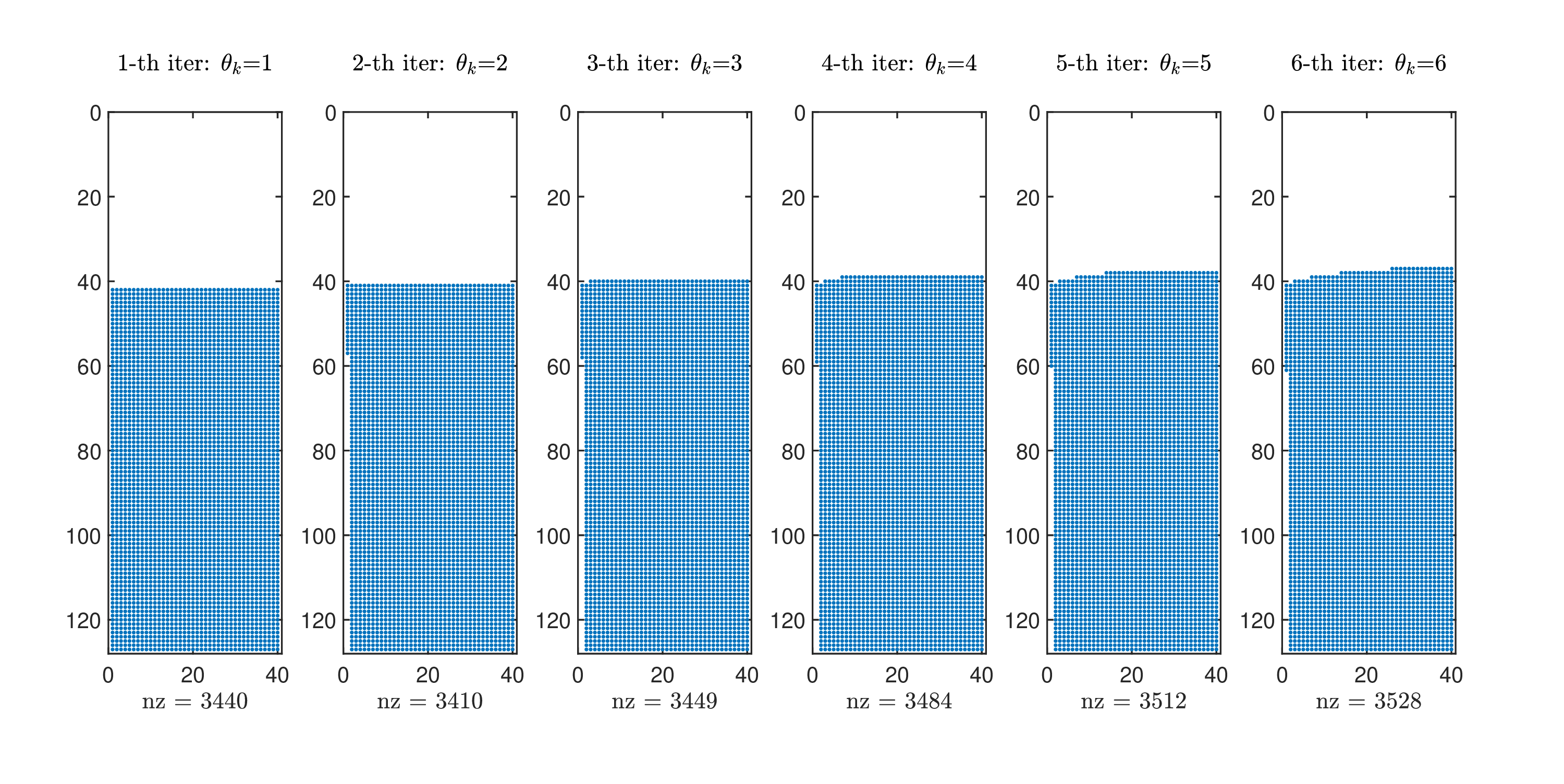}
	\end{center}
	\caption{the  nonzero pattern (blue dot=1, white blank=0) of the  reshaped matrix $\texttt{Tmat}(\Theta^{(k)})$ at each PinT-Policy iteration (Example 1 with $N_s=128$, $N_t=40$ and $\alpha=10^{-8}$). }	
	\label{FigEx1_IndexPattern}
\end{figure}
Clearly, Figure \ref{FigEx1_IndexPattern} shows that almost all columns of $\widehat\Theta^{(k)}$ can be approximated by the same column vector, which is exactly what the Kronecker product approximation (\ref{KronTheta}) is trying to achieve. In other words, all diagonal blocks $\Theta^{(k)}_j$ can be approximated by the same diagonal matrix $\Psi^{(k)}$. More specifically, we are essentially trying to obtain rank-one approximation in the following form
\begin{equation} \label{HatPsi}
	\texttt{Tmat}(\Theta^{(k)})\approx \texttt{Tmat}(I_t\otimes \Psi^{(k)}) =\left[\diag(\Psi^{(k)}),\diag(\Psi^{(k)}),\cdots,\diag(\Psi^{(k)})\right].
\end{equation}
To demonstrate the low-rank structure of $\texttt{Tmat}(\Theta^{(k)})$, we also report in Figure \ref{FigEx1_IndexPattern} its small rank   
\begin{equation}
	\theta_k:=\mathrm{rank}\left(\texttt{Tmat}(\Theta^{(k)})\right)\ll \min(N_t,N_s),
\end{equation}
which explains why the rank-one matrix $\texttt{Tmat}(I_t\otimes \Psi^{(k)})$ can possible provide a good approximation. How fast the 0-1 interface in the nonzero pattern of $\texttt{Tmat}(\Theta^{(k)})$ changes across the policy iterations is determined by the free optimal exercise boundary that depends on the problem parameters.

The diagonal matrix $\Psi^{(k)}$ can be determined from the nearest Kronecker product approximation (NKPA) formulation \cite{VanLoan1993,Liu2022}
\begin{equation}
	\Psi^{(k)}=\argmin_{D\ \text{is diagonal}}\left\|\Theta^{(k)}- I_t\otimes D\right\|_F,
\end{equation}
where $\Psi^{(k)}$ has the following explicit formula (i.e. the average of all diagonal blocks)
\begin{equation}
	\Psi^{(k)}= \frac{1}{N_t}\sum_{j=1}^{N_t}{\Theta_j^{(k)}}.
\end{equation} 
It is interesting to point out the following connection
\begin{equation}
	 \left\|\Theta^{(k)}- I_t\otimes D\right\|_F
 = \|\texttt{Tmat}(\Theta^{(k)})-\texttt{Tmat}(I_t\otimes D)\|_F,
\end{equation}
which implies the above NKPA is essentially a special structured rank-one approximation of $\Theta^{(k)}$.
Another meaningful choice is to take  the most frequently occurring value (0 or 1), that is
\begin{equation}
	\left(\Psi^{(k)}\right)_{i,i}= \mathrm{mode} \left\{(\Theta_j^{(k)})_{i,i}\right\}_{j=1}^{N_t},
\end{equation}
which avoids averaging. Numerically, we found this also works well but slightly less robust. 

In order to utilize the block $\alpha$-circulant type matrix, we establish the preconditioner as follow:
\begin{equation}
\mathcal{P}_\alpha^{(k)} = \mathcal{I} + (I_t\otimes \Psi^{(k)})(\mathcal{M}_{\alpha} - \mathcal{I}),\quad
\mathcal{M}_{\alpha} = B^{(\alpha)}\otimes I_s - I_t\otimes L_h,\quad \alpha\in(0,1),
\end{equation}
where $B^{(\alpha)}$ is an $\alpha$-circulant matrix approximate the matrix $B$ as follow:
\begin{equation*}
B^{(\alpha)} = \frac{1}{\tau}
\begin{bmatrix}
	1 &  & & -\alpha \\
	-1 & 1 &  & \\
	& \ddots & \ddots & \\
	&  & -1 & 1
\end{bmatrix} \in \mathbb{R}^{N_t \times N_t}.
\end{equation*}

In the following, we outline the following implementation of our used
policy iteration with PinT preconditioning for all-at-once LCP (\ref{eq3.3}), where the preconditioner $\mathcal{P}_\alpha^{(k)}$ is our main focus.
\begin{algorithm}
	\caption{Policy iteration with PinT preconditioning for \eqref{eq3.3}}
	\begin{algorithmic}
		\REQUIRE the matrices $B$, $L_h$ and the vectors ${\bm f}, {\bm  \Phi}$
		\ENSURE the approximate solution ${\bm v}$
		\STATE Choose the initial guess ${\bm v}^{0}$
		\FOR{$k = 0,1,\cdots$,}
		\STATE Solve the inner system \eqref{eq3.4} via a Krylov subpace solver with the preconditioner $
		\mathcal{P}_\alpha^{(k)}$,
		\IF{$k\geq 1$}
		\STATE ${\bm v}^{k} = {\bm v}^{k-1}$, Stop
		\ELSE
		\STATE For $i = 1,2,\cdots,N_sN_t$, we take
		\[\Theta^{(k+1)}_{i,i}=
		\begin{cases}
			1, & {\rm if}~ \left(\mathcal{M}{\bm v}^{k} - {\bm f}\right)_i\leq
			\left({\bm v}^{(k)} - {\bm \Phi}\right)_i\\
			%
			%
			%
			0, & {\rm otherwise}.
		\end{cases}
		\]
		\ENDIF
		\ENDFOR
	\end{algorithmic}
\end{algorithm}

Let $\mathbb{F} = \left[ \frac{\omega^{j k}}{\sqrt{N_t}}\right]_{j,k = 0}^{N_t - 1}$ be the discrete
Fourier matrix, where $\omega = e^{-2 \pi \mathrm{\mathbf{i}}/N_t} (\mathrm{\mathbf{i}} = \sqrt{-1})$.
We know that $\alpha$-circulant matrix $B^{(\alpha)}$ can be diagonalized as
$B^{(\alpha)} = V_\alpha D^{(\alpha)} V_\alpha^{-1}$ \cite{bini2005}, with
\begin{equation*}
	V_\alpha = \Lambda_\alpha^{-1} \mathbb{F}^* \quad \mathrm{and} \quad
	D^{(\alpha)} = \mathrm{diag}\left(\sqrt{N_t}\, \mathbb{F}\, \Lambda_\alpha\, B^{(\alpha)}(:,1) \right)
	= \mathrm{diag}\left( \lambda_1,\ldots,\lambda_{N_t} \right),
\end{equation*}
where $\Lambda_\alpha = \mathrm{diag}\left(1, \alpha^{\frac{1}{N_t}}, \ldots, \alpha^{\frac{N_t - 1}{N_t}} \right)$, `*' denotes the conjugate transpose of a matrix,
$B^{(\alpha)}(:,1)$ is the first column of $B^{(\alpha)}$
and
\begin{equation}
\lambda_k = 1 - \alpha^{\frac{1}{N_t}} e^{\frac{2 (k - 1) \pi \mathrm{\mathbf{i}}}{N_t}}~(k = 1, \ldots, N_t).
\label{eq3.13}
\end{equation}
The following property of the eigenvalues $\lambda_k$ in Eq. (\ref{eq3.13}) of the matrix $B^{(\alpha)}$ will be used later.
\begin{proposition}
For $\alpha \in (0,1)$, the real part of $\lambda_{k}$ is positive, i.e.,
$\Re{\rm e}(\lambda_{k}) > 0$.
\label{pro3.3}
\end{proposition}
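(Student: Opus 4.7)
The plan is to compute the real part of $\lambda_k$ directly and bound it using the fact that $\alpha^{1/N_t}$ is strictly less than one when $\alpha\in(0,1)$.

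First, I would expand the exponential in $\lambda_k = 1 - \alpha^{1/N_t} e^{2(k-1)\pi\mathbf{i}/N_t}$ using Euler's formula, so that
\[
\Re\mathrm{e}(\lambda_k) = 1 - \alpha^{1/N_t}\cos\!\left(\frac{2(k-1)\pi}{N_t}\right).
\]
Next, I would observe that for $\alpha\in(0,1)$ the strict inequality $\alpha^{1/N_t}<1$ holds, and the cosine factor is bounded above by $1$ in absolute value, giving
\[
\alpha^{1/N_t}\cos\!\left(\frac{2(k-1)\pi}{N_t}\right)\le \alpha^{1/N_t}<1.
\]
Combining these yields $\Re\mathrm{e}(\lambda_k)>0$ for every $k=1,\dots,N_t$.

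There is essentially no obstacle here; the argument reduces to the trivial bound $|\cos\theta|\le 1$ together with $\alpha^{1/N_t}<1$. The only subtle point worth flagging is that the inequality must remain strict, which is ensured by $\alpha<1$ (not merely $\alpha\le 1$); at $\alpha=1$ and $k=1$ one would obtain $\lambda_1=0$, which is precisely the singular case the $\alpha$-circulant perturbation is designed to avoid.
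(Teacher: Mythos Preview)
Your proof is correct and follows exactly the same approach as the paper, which simply notes that the assertion can be verified directly via Euler's formula $e^{\mathbf{i}\theta}=\cos\theta+\mathbf{i}\sin\theta$. Your write-up merely makes explicit the one-line computation the paper leaves implicit.
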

\begin{proof}
The assertion can be verified directly by noting the Euler formula $e^{\mathbf{i}\theta}=\cos(\theta)+\mathbf{i}\sin(\theta)$.
\end{proof}

According to the property of Kronecker product, we can easily factorize
$\mathcal{P}_\alpha^{(k)}$ as
\begin{equation}
\begin{split}
\mathcal{P}_\alpha^{(k)} & = B^{(\alpha)}\otimes \Psi^{(k)} - I_s\otimes\left[\Psi^{(k)}
L_h + (\Psi^{(k)} - I_s)\right]\\
& = (V_{\alpha}\otimes I_s)\left[D^{(\alpha)}\otimes \Psi^{(k)} -
I_t\otimes \left[\Psi^{(k)}
L_h + (\Psi^{(k)} - I_s)\right]
\right]\left(V^{-1}_{\alpha}\otimes I_x\right),
\end{split}
\end{equation}
and this implies that when the preconditioned Krylov subspace solver is applied to Eq. (\ref{eq3.4}), we can solve the preconditioned sub-system
${\bm z} = \left[\mathcal{P}_\alpha^{(k)}\right]^{-1}{\bm r}$ via three steps:
\begin{equation}
\begin{cases}
{\bm z}_1 = \left(V^{-1}_{\alpha}\otimes I_x\right){\bm r} = (\mathbb{F}\otimes I_s)\left[(\Lambda^{-1}_{\alpha}\otimes I_s){\bm r}\right], &{\rm \text{Step-(a)}},\\
\left[\lambda_n\Psi^{(k)} - \left(\Psi^{(k)}L_h + (\Psi^{(k)} - I_s)\right)\right] {\bm z}_{2,n} = {\bm z}_{1,n},\quad n = 1,2,\cdots,N_t,&{\rm \text{Step-(b)}},\\
{\bm z} = (V_{\alpha}\otimes I_s){\bm z}_2
= (\Lambda_{\alpha}\otimes I_s)\left[(\mathbb{F}^{*}\otimes
I_x){\bm z}_2\right],&{\rm \text{Step-(c)}},
\end{cases}
\label{eq3.15}
\end{equation}
where ${\bm z}_j = \left[{\bm z}^{\top}_{j,1},{\bm z}^{\top}_{j,2},\cdots,{\bm z}^{\top}_{j,N_t}
\right]^{\top}$ (with $j = 1,2$) and $\lambda_n$ is the $n$-th eigenvalue of $B^{(\alpha)}$. The first and third steps only concern matrix-vector multiplications and can be implemented efficiently by using the fast Fourier transforms (FFTs). The major computation cost lies in the second Step-(b), but fortunately this step is fully parallel for the $N_t$ time steps.
For 1D problems, the linear systems in  Step-(b) are tridiagonal and hence they can be solved by fast Thomas solver with $\mathcal{O}(N_s)$ complexity.
However, for 2D/3D problems, the computational cost of solving sparse linear systems in Step-(b)  can be still high, unless more efficient direct or iterative solvers can be designed.
\begin{proposition}
If the spatial matrix $L_h$ is negative semi-definite, then the preconditioner $\mathcal{P}_\alpha^{(k)}$ is well-defined, i.e., it is invertible.
\label{pro3.4}
\end{proposition}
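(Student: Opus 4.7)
The plan is to use the diagonalization already derived right above the statement to reduce invertibility of $\mathcal{P}_\alpha^{(k)}$ to invertibility of each frequency block
\[
M_n := \lambda_n \Psi^{(k)} - \bigl[\Psi^{(k)} L_h + (\Psi^{(k)} - I_s)\bigr] = I_s + \Psi^{(k)}\bigl((\lambda_n-1)I_s - L_h\bigr),\qquad n=1,\dots,N_t,
\]
since $V_\alpha\otimes I_s$ and its inverse are nonsingular. So the whole claim reduces to showing each $M_n$ is nonsingular.

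The first obstacle I would anticipate is that $\Psi^{(k)}$ is only positive semi-definite: its diagonal entries lie in $[0,1]$ and some may vanish, so I cannot factor out $\Psi^{(k)}$ directly. To handle this, I would split the index set $\{1,\dots,N_s\}$ into $P=\{i:\Psi^{(k)}_{ii}=0\}$ and $Q=\{i:\Psi^{(k)}_{ii}>0\}$. For any row $i\in P$, the terms $\Psi^{(k)} L_h$ and $\Psi^{(k)}$ both vanish in that row, so the $i$-th row of $M_n$ is $e_i^\top$. After reordering indices so $P$ comes first, $M_n$ becomes block lower triangular with identity in the $P$-block, reducing invertibility of $M_n$ to invertibility of its $Q$-principal submatrix
\[
M_n^{QQ} = I_Q + \Psi^{(k)}_{QQ}\bigl((\lambda_n-1)I_Q - (L_h)_{QQ}\bigr).
\]

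Since $\Psi^{(k)}_{QQ}$ is diagonal with strictly positive entries, I can factor it out on the left and instead prove invertibility of
\[
N := \bigl(\Psi^{(k)}_{QQ}\bigr)^{-1} + (\lambda_n - 1) I_Q - (L_h)_{QQ}.
\]
For this I would look at the Hermitian part. For any complex $x\ne 0$,
\[
\operatorname{Re}\langle Nx,x\rangle = \bigl\langle\bigl((\Psi^{(k)}_{QQ})^{-1}-I_Q\bigr)x,x\bigr\rangle + \operatorname{Re}(\lambda_n)\|x\|^2 - \operatorname{Re}\langle (L_h)_{QQ}x,x\rangle.
\]
The first term is nonnegative because $(\Psi^{(k)}_{QQ})^{-1}-I_Q$ is diagonal with entries $\Psi^{(k)}_{ii}{}^{-1}-1\ge 0$ (as $\Psi^{(k)}_{ii}\le 1$); the third is nonnegative because principal submatrices inherit negative semi-definiteness from $L_h$ (embed $x$ by zero padding); and the middle term is strictly positive by Proposition \ref{pro3.3} together with $x\ne 0$. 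Hence $\operatorname{Re}\langle Nx,x\rangle>0$ for all $x\ne 0$, so $N$ is nonsingular, whence $M_n^{QQ}$, $M_n$, and finally $\mathcal{P}_\alpha^{(k)}$ are nonsingular.

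The only genuinely delicate step is the index splitting that deals with the (possibly rank-deficient) projector-like factor $\Psi^{(k)}$; once the problem is restricted to the support of $\Psi^{(k)}$, the conclusion follows from a standard positive-definite-Hermitian-part argument leveraging $\operatorname{Re}(\lambda_n)>0$ and the negative semi-definiteness of $L_h$, without requiring $L_h$ to be symmetric.
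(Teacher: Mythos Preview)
Your argument is correct and follows the same skeleton as the paper: use the diagonalization to reduce invertibility of $\mathcal{P}_\alpha^{(k)}$ to that of each frequency block, permute according to the zero/nonzero pattern of $\Psi^{(k)}$ to peel off an identity block, and then show the remaining principal block is nonsingular via a positive-Hermitian-part argument driven by $\Re(\lambda_n)>0$ (Proposition~\ref{pro3.3}) and the negative semi-definiteness of $L_h$.

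The one substantive difference is that the paper's proof treats $\Psi^{(k)}$ as a strict $0$--$1$ diagonal matrix, so on the nonzero block it lands directly on the principal submatrix $L_{11}$ of $\lambda_n I_s - L_h$. You instead allow the diagonal entries of $\Psi^{(k)}$ to lie in $(0,1]$, factor out $\Psi^{(k)}_{QQ}$, and absorb the resulting $(\Psi^{(k)}_{QQ})^{-1}-I_Q\succeq 0$ into the Hermitian-part estimate. This is a genuine (and cheap) extension: the paper's primary choice of $\Psi^{(k)}$ is the NKPA average $\tfrac{1}{N_t}\sum_j \Theta_j^{(k)}$, which is generally \emph{not} $0$--$1$, so your version covers the case the paper's own proof tacitly restricts away, while reducing to the paper's argument when $\Psi^{(k)}$ happens to be binary.
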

\begin{proof}
According to the definition of the matrix $\mathcal{P}_\alpha^{(k)}$, we need to show all the coefficient matrices of Eq. (\ref{eq3.15}) are nonsingular. For the 0-1 matrix $\Psi^{(k)}$, there is a permutation matrix $P$ such that 
\begin{equation}
P\Psi^{(k)}P^{\top} = 
\begin{bmatrix}
I_1 \\
 & O
\end{bmatrix},\quad PP^\top = I_s: = \begin{bmatrix}
I_1 \\
& I_2
\end{bmatrix},
\end{equation}
where $I_1$ is an identity matrix of order $n_1$ (equal to the number of 1's in the matrix $\Psi^{(k)}$). Hence
\begin{equation}
P(\lambda_nI_s-L_h)P^{\top} = 
\begin{bmatrix}
L_{11} & L_{12}\\
L_{21} & L_{22}
\end{bmatrix},
\end{equation}
which is positive definite according to Proposition \ref{pro3.3}. At this stage, we have
\begin{equation}
\begin{split}
P\left[I_s + \Psi^{(k)}((\lambda_n I_s - L_h) - I_s)\right]P^{\top} & =  
I_s + P\Psi^{(k)}P^{\top} P\left[(\lambda_nI_{s}-L_h) - I_s\right]P^{\top}\\
& = I_s + \begin{bmatrix}
	I_1 \\
	& O
\end{bmatrix}\begin{bmatrix}
L_{11} - I_1 & L_{12}\\
L_{21}  & L_{22} - I_2 
\end{bmatrix}\\
& = \begin{bmatrix}
L_{11} & L_{12}\\
O & I_2
\end{bmatrix},
\end{split}
\end{equation}
whose non-unity eigenvalues are given by the eigenvalues of the submatrix $L_{11}$. Notice the principal submatrix $L_{11}$ is also positive definite, then its eigenvalues have positive real parts \cite{Horn_J2012}. Thus, all the coefficient matrices of Step-(b) in Eq. (\ref{eq3.15}) are nonsingular, which completes the proof.
\end{proof}

\begin{figure}[!htp]
	\begin{center} 
		\includegraphics[width=0.9\textwidth]{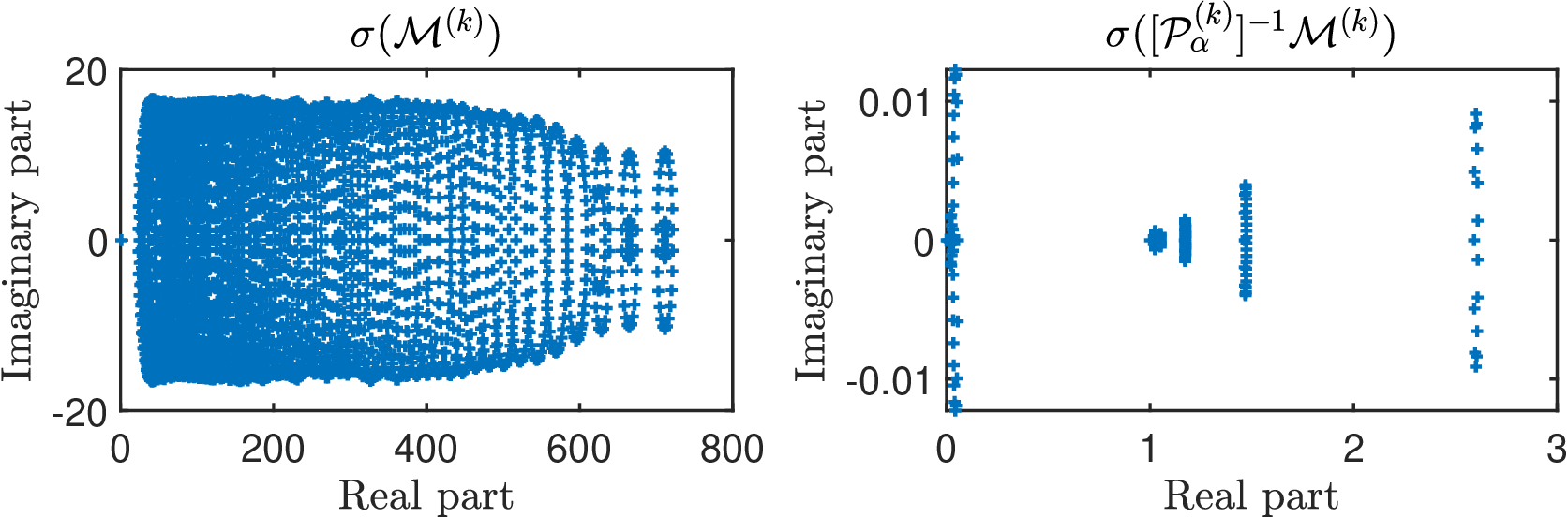}
	\end{center}
	\caption{the spectrum distribution of $\mathcal{M}^{(k)}$ and $[\mathcal{P}_\alpha^{(k)}]^{-1}\mathcal{M}^{(k)}$ in the last ($k=6$) PinT-Policy iteration (Example 1 with $N_s=128$,  $N_t=40$, and $\alpha=10^{-8}$).}	
	\label{FigEx1_nkpa_eigdist}
\end{figure}
Figure \ref{FigEx1_nkpa_eigdist} illustrates the spectrum distribution of $\mathcal{M}^{(k)}$ and the preconditioned matrix $\left[\mathcal{P}_\alpha^{(k)}\right]^{-1}\mathcal{M}^{(k)}$ in the last ($k=6$) PinT-Policy iteration of Example 1. Clearly, the spectrum of preconditioned matrix becomes much more clustered. Nevertheless, there are some eigenvalues near the origin, which indicates less robust convergence rates may be possible, especially in 2D problems.
To provide a glimpse on the approximation property of $\mathcal{P}_\alpha^{(k)}$, we consider the following difference 
\begin{equation} 
\begin{split} 
{\|\mathcal{M}^{(k)} -\mathcal{P}_\alpha^{(k)}\|_F} 
& = {\|\Theta^{(k)}(\mathcal{M} - \mathcal{I}) - (I_t\otimes \Psi^{(k)})(\mathcal{M}_{\alpha} - \mathcal{I})\|_F} \\
& \leq {\|\Theta^{(k)} - (I_t\otimes \Psi^{(k)})\|_F\|\mathcal{M} - \mathcal{I}\|_F + \|I_t\otimes \Psi^{(k)}\|_F\|\mathcal{M} - \mathcal{M}_{\alpha}\|_F},
\end{split}
\end{equation}
which indicates the spectrum analysis of the preconditioned matrices $\left[\mathcal{P}_\alpha^{(k)}\right]^{-1}\mathcal{M}^{(k)}$ is highly nontrivial and further discussion on this issue is beyond the scope of this paper. 

%

\subsubsection{2D model: projection to reduced system.}
According to the above construction of the PinT preconditioner, its key is how to approximate the block diagonal matrix $\Theta^{(k)}$ by a simple Kronecker product. However, the direct application of the above PinT preconditioner seems to work less effectively for solving the 2D models governed by the HJB equation (\ref{eq1.4}). In this subsection, we will propose an alternative approach of constructing PinT preconditioner without introducing any approximation to the 0-1 matrix $\Theta^{(k)}$, which is expected to deliver improved convergence rates.

Given the 0-1 structure of $\Theta^{(k)}$, there is a partitioned permutation matrix $\mathcal{S} = \begin{bmatrix} 
\mathcal{S}_1\\
\mathcal{S}_2
\end{bmatrix}$
such that $$\mathcal{S}\Theta^{(k)}\mathcal{S}^{\top}=\begin{bmatrix}
\mathcal{I}_1\\
&  O 
\end{bmatrix},\qquad \mathcal{S}\mathcal{S}^{\top}=\mathcal{I}=\begin{bmatrix}
\mathcal{I}_1\\
&  \mathcal{I}_2 
\end{bmatrix}$$
where $\mathcal{I}_1$ is an identity matrix collecting all the 1's in $\Theta^{(k)}$.
We first project the coefficient matrices $\mathcal{M}^{(k)}$ with the help of $\mathcal{S}$ as the following (here we omitted superscript $k$ in $\mathcal{S}$ for simplicity)
\begin{equation}
\begin{split}
\mathcal{S}\mathcal{M}^{(k)}\mathcal{S}^{\top}
& = \mathcal{I} + 
\mathcal{S}\Theta^{(k)}\mathcal{S}^{\top}\mathcal{S}(\mathcal{M} - 
\mathcal{I})\mathcal{S}^{\top}\\
& = \begin{bmatrix}
\mathcal{I}_1 \\
& \mathcal{I}_2
\end{bmatrix} + \begin{bmatrix}
\mathcal{I}_1\\
&  O 
\end{bmatrix}\left(\begin{bmatrix}
\mathcal{S}_1\\
\mathcal{S}_2
\end{bmatrix}\mathcal{M}\begin{bmatrix}
\mathcal{S}^{\top}_1 & \mathcal{S}^{\top}_2 
\end{bmatrix} - \begin{bmatrix}
\mathcal{I}_1\\
 & \mathcal{I}_2
\end{bmatrix}\right)\\
& = \begin{bmatrix} 
\mathcal{S}_1\mathcal{M}\mathcal{S}^{\top}_1 & \mathcal{S}_1\mathcal{M}\mathcal{S}^{\top}_2\\
 & \mathcal{I}_2
\end{bmatrix},
\end{split}
\end{equation}
which suggests to solve the following transformed linear system 
\begin{equation}
 \mathcal{S}\mathcal{M}^{(k)}\mathcal{S}^{\top} {\bm y}^{(k)}  = \mathcal{S}{\bm f}^{(k)}, \qquad
{\bm y}^{(k)}  = \mathcal{S}{\bm v}^{(k)},
\end{equation}
i.e., we need to solve 
\begin{equation}
\begin{bmatrix} 
	\mathcal{S}_1\mathcal{M}\mathcal{S}^{\top}_1 & \mathcal{S}_1\mathcal{M}\mathcal{S}^{\top}_2\\
	& \mathcal{I}_2
\end{bmatrix}
\begin{bmatrix}
{\bm y}^{(k)}_1\\
{\bm y}^{(k)}_2
\end{bmatrix} = 
\begin{bmatrix}
{\bm f}^{(k)}_1\\
{\bm f}^{(k)}_2  
\end{bmatrix} ,
\end{equation}
which leads to ${\bm y}^{(k)}_2 = {\bm f}^{(k)}_2$ and the projected linear system of reduced size
\begin{equation} 
	\widetilde{\mathcal{M}} {\bm y}^{(k)}_1:= \left[\mathcal{S}_1\mathcal{M}\mathcal{S}^{\top}_1\right]{\bm y}^{(k)}_1 = 
{\bm f}^{(k)}_1- \left[\mathcal{S}_1\mathcal{M}\mathcal{S}^{\top}_2\right]{\bm y}^{(k)}_2.
\label{eq3.22}
\end{equation}
To better exploit the favorable structure of $\mathcal{M}$ and inspired by the projection idea in \cite{Gould01},  we similarly design the projected PinT preconditioner for the linear system (\ref{eq3.22}):
\begin{equation}
\widetilde{\mathcal{M}}_\alpha^{-1} := \mathcal{S}_1\mathcal{M}^{-1}_{\alpha}\mathcal{S}^{\top}_1.
\end{equation}
When the preconditioned Krylov subspace solver is applied to Eq. (\ref{eq3.22}) with the preconditioner $\widetilde{\mathcal{M}}_\alpha^{-1}$, each iteration of the preconditioned sub-system needs to compute
${\bm z} = \widetilde{\mathcal{M}}_\alpha^{-1}{\bm r}$ via three steps: 
\begin{equation}
\begin{cases}	
	{\bm z}_1 = \left(V^{-1}_{\alpha}\otimes I_x\right)\left(\mathcal{S }^{\top}_1{\bm r}\right) = (\mathbb{F}\otimes I_s)\left[(\Lambda^{-1}_{\alpha}\otimes I_s)\mathcal{S}^{\top}_1{\bm r}\right], &{\rm \text{Step-(a)}},\\
	\left(\lambda_n I_s - L_h \right) {\bm z}_{2,n} = {\bm z}_{1,n},\quad n = 1,2,\cdots,N_t,&{\rm \text{Step-(b)}},\\
	{\bm z} = \mathcal{S}_1(V_{\alpha}\otimes I_s){\bm z}_2
	= \mathcal{S}_1(\Lambda_{\alpha}\otimes I_s)\left[(\mathbb{F}^{*}\otimes
	I_x){\bm z}_2\right],&{\rm \text{Step-(c)}}.
\end{cases}
\label{eq3.24}
\end{equation}
If Propositions \ref{pro3.3}--\ref{pro3.4} hold, it is easy to show that the preconditioner $\widetilde{\mathcal{M}}_\alpha$ is nonsingular (see \cite{lin2021all,gander21paradiag}) and it can be similarly implemented in parallel computations.

The effectiveness of the preconditioner $\widetilde{\mathcal{M}}_\alpha^{-1}$ largely depends on the spectrum distribution of $\widetilde{\mathcal{M}}_\alpha^{-1}\widetilde{\mathcal{M}}$, where a highly clustered spectrum around one usually indicates a fast convergence rate.
We mention that the spectrum of the block $\alpha$-circulant preconditioned matrix $\mathcal{M}_{\alpha}^{-1}\mathcal{M}$ is already known to be uniformly clustered around one  depending only on $\alpha\in (0,1)$.
In Figure \ref{FigEx1_eigdist}, we compared the spectrum distribution of  the preconditioned matrix $\mathcal{M}_{\alpha}^{-1}\mathcal{M}$ and its projected submatrix $\widetilde{\mathcal{M}}_\alpha^{-1}\widetilde{\mathcal{M}}$,
where the spectrum of $\widetilde{\mathcal{M}}_\alpha^{-1}\widetilde{\mathcal{M}}$ seems to be even more clustered  around one. Hence, we anticipate the projected preconditioner $\widetilde{\mathcal{M}}_\alpha^{-1}$ leads to a fast convergence rate as confirmed by the following numerical examples.
\begin{figure}[!htp]
	\begin{center} 
		\includegraphics[width=0.9\textwidth]{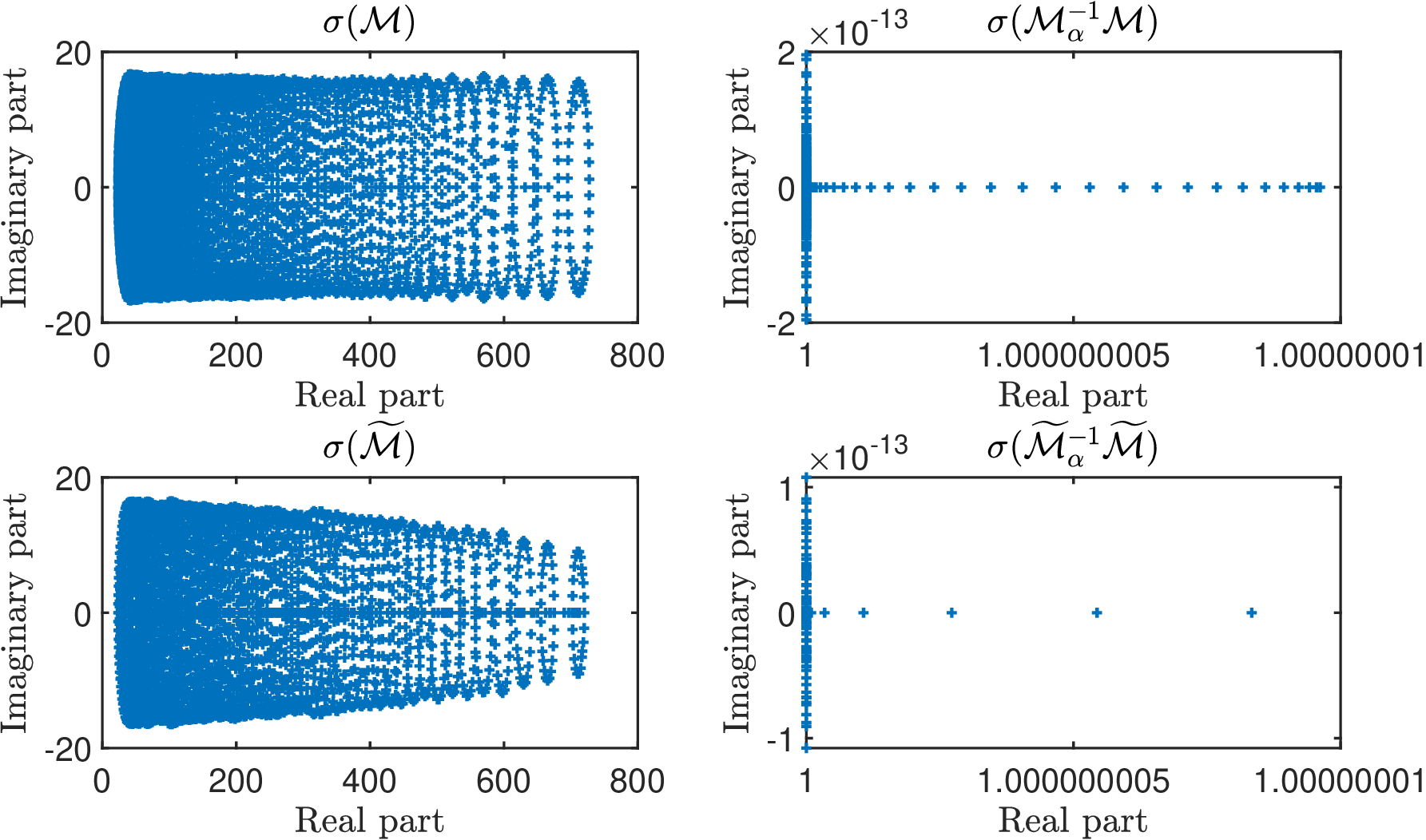}
	\end{center}
	\caption{the spectrum distribution of $\mathcal{M}$ and $\mathcal{M}_{\alpha}^{-1}\mathcal{M}$ and its projected submatrices $\widetilde{\mathcal{M}}$ and $\widetilde{\mathcal{M}}_\alpha^{-1}\widetilde{\mathcal{M}}$ in the last PinT-Policy iteration (Example 1 with $N_s=128$,  $N_t=40$, and $\alpha=10^{-8}$).}	
	\label{FigEx1_eigdist}
\end{figure}
\begin{remark}
Similar to the conjugate technique in \cite{lin2021all}, we find that the complex eigenvalues/eigenvectors of $B^{(\alpha)}$ always appear in conjugate pairs. Thus, we only need to solve for the first $\lceil\frac{N_t+1}{2}\rceil$ systems in Step-(b) of Eq. (\ref{eq3.15}) or Eq. (\ref{eq3.24}). The remaining systems are solved by conjugating the obtained solutions in order to reduce the computational cost by about half.
\end{remark} 
\section{Numerical examples}
\label{sec4}
In this section, we present several numerical examples to illustrate the effectiveness of our proposed PinT-based policy iterations for both 1D and 2D American put option pricing problems.  All simulations are implemented with MATLAB R2024a on a Dell Precision 5820 Workstation with Intel(R) Core(TM) i9-10900X CPU@3.70GHz and 64GB RAM,
where the CPU times (in seconds) are estimated by the timing functions \texttt{tic/toc}.
For the outer all-at-once policy iterations, we choose the initial guess to be the payoff function over all time points, i.e., $\bm v^0=\Phi$, and the stopping criterion based on the absolute residual norm in infinity norm, that is
\[
r_k:=	\left\|\min\left(\mathcal{M} {\bm v^k} - {\bm f}, {\bm v^k} - {\bm \Phi}\right)\right\|_\infty \le tol_1:=10^{-6}.
\]
For the $k$-th inner right preconditioned GMRES iterations, we choose the initial guess to be the approximation $\bm w^0=\bm v^{k-1}$ from the previous policy iteration, and the stopping criterion based on the relative residual norm in Euclidean norm, that is
\[
{\widehat r_l}/{\widehat r_0}\le tol_2:=10^{-10}
\]
with
\[
\widehat r_l:=	\left\|\mathcal{M}^{(k)} \bm w^l- {\bm f}^{(k)}\right\|_2.
\]
Notice that we have selected $tol_2\ll tol_1$ such that the inner linear systems are accurately solved. For the standard policy iterations, the inner linear systems are solved by the highly optimized backslash sparse direct solver within MATLAB, which are computationally expensive due to the all-at-once structure, especially for 2D problems. We will use '--' to indicate out of time limit.

\subsection{Example 1. 1D Black-Scholes Model}
If we consider the B-S model by Black $\&$ Scholes \cite{Black73} and Merton \cite{Merton73}, then it follows that
\begin{equation}
	\begin{split}
		\mathcal{L}V({\bm x},t) &= -\frac{\partial V(s,t)}{\partial t} -
		\frac{\partial^2 V(s,t)}{\partial s^2} - rs\frac{\partial V(s,t)}{\partial s} + rV(s,t)\\
		& := -\frac{\partial V(s,t)}{\partial t} - \mathcal{L}_{BS}V(s,t),\quad (s,t)\in(0,\infty)\times(0,T],
	\end{split}
	\label{eq1.2}
\end{equation}
where $\sigma>0$ is the volatility and $r\ge 0$ is the risk-free interest rate. 
For a non-negative interest rate $r$, the price $u$ of an American put option satisfies the Dirichlet boundary conditions
\begin{equation}
	u(0,t) = K~~{\rm and}~~u(s_{max},t) = 0
\end{equation}
for 1D B-S model. We choose the following model parameters as suggested in \cite{Sydow19}:
\[T=1, K=100, \sigma=0.15, r=0.03, s_{\max}=300.
\]
Figure \ref{FigEx1surface} depicts the typical computed solution surface, which follows the trend of the payoff initial condition.
\begin{figure}[htp!]
	\begin{center}
		\includegraphics[width=1\textwidth]{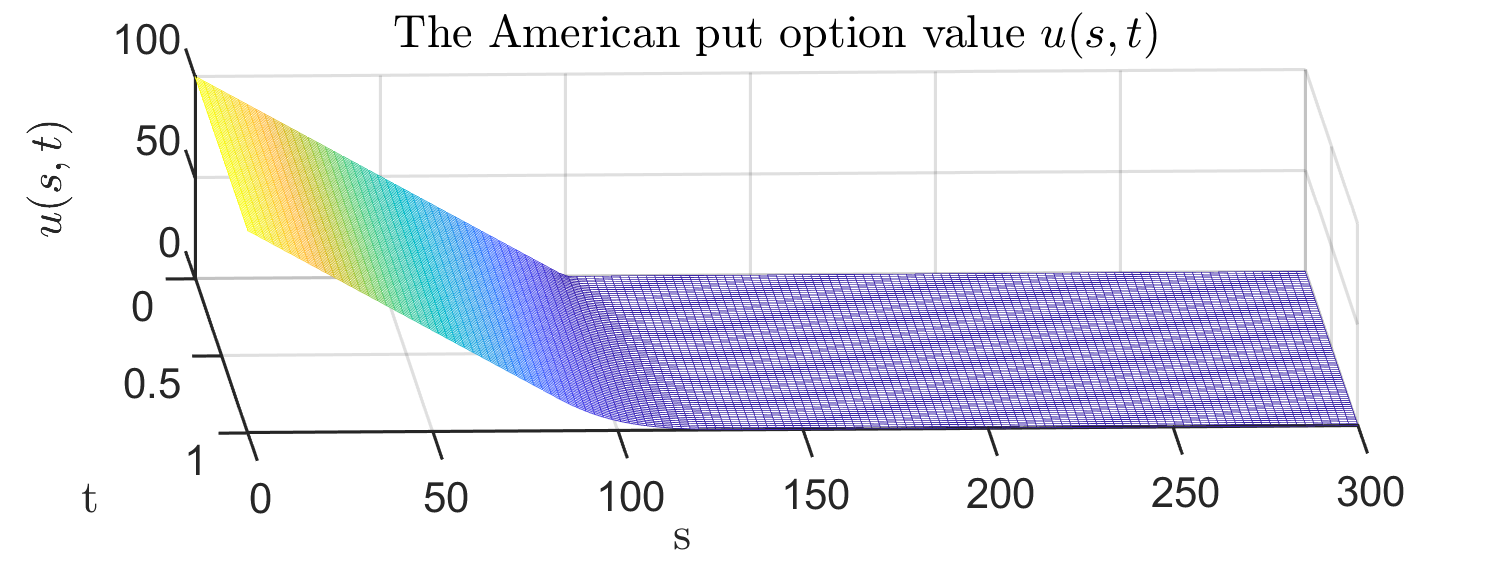} 
	\end{center}
	\caption{1D American put option value surface (with $N_s=128, N_t=80$). }
		\label{FigEx1surface}
\end{figure} 
In Table \ref{Ex1_Table}, we report the errors and iteration numbers of the standard direct Policy iteration and our proposed PinT-Policy iteration with respect to increasing $N_t$ while fixing $N_s$. The `Error' column verifies the expected first-order accuracy due to backward Euler scheme in time. 
We observe the numbers of both Policy iteration are independent of $N_t$, which indicates the strategy of coupling all time steps into an all-at-once large LCP is practical. The inner total preconditioned GMRES iteration number also demonstrates a very robust convergence rate, which illustrates the effectiveness of our proposed PinT preconditioner. The CPU times of PinT-Policy iterations shows a clear linear $\mathcal{O}(N_t)$ complexity for a fixed $N_s$, which is anticipated since the obtained linear systems in Step-(b) can be solved by the fast Thomas tridiagonal solver with a linear complexity.
Based on our MATLAB implementation on a serial PC, our proposed PinT-Policy iteration is about 4-5 times faster than the Policy iteration based on sparse direct solver.
Nevertheless, it is already well-known \cite{Bokanowski09,Reisinger12} that the PinT-Policy iteration numbers grow linearly with respect the spatial mesh size $N_s$, as shown in
Table \ref{Ex1_Table2}, where the average preconditioned GMRES iteration numbers (in parentheses) seem to be mesh-independent, which attributes to our proposed PinT preconditioner.
\begin{table}[!htp]
	\centering
	\caption{Numerical results for Example 1 ($N_s=1280$).
	}
	\begin{tabular}{|c|cc|cc|ccc|}\hline
		&&&\multicolumn{2}{c|}{Policy iteration}  &\multicolumn{3}{c|}{PinT-Policy iteration}
		\\ 	\hline 	
		 $N_t$ & $u(T,K)$ & Error & P-Iter & CPU &  P-Iter & G-Iter & CPU \\ \hline
20& 	 4.771630&	 4.90e-02&	 68&  3.17& 	   68 &191 &   1.16	       \\ \hline
40& 	 4.795074&	 2.55e-02&	 68&  7.83& 	   68 &224 &   2.34  	       \\ \hline
80& 	 4.807377&	 1.32e-02&	 68&  19.60&    68 &230 &   5.14  	       \\ \hline	
160& 	 4.813769&	 6.84e-03&	 68&  49.83&    68 &236 &   10.92 	       \\ \hline	
320& 	 4.817070&	 3.54e-03&	 68&  123.23&   68 &237&   21.26 	       \\ \hline	
640& 	 4.818767&	 1.84e-03&	  -- &  --   &      68 &239&   42.39 	       \\ \hline	
1280& 	 4.819637&	 9.71e-04&	 --  &  --   &     68 &239 &   85.54 	       \\ \hline	
		\hline
		Ref. \cite{Sydow19} & 4.820608 &&&&&&\\
		\hline
	\end{tabular}
	\label{Ex1_Table}
\end{table}
\begin{table}[!htp]
	\centering
	\caption{PinT-Policy and average GMRES iteration numbers for Example 1.
	}
	\begin{tabular}{|c|c|c|c|c|c|c|c|}\hline
		$N_t\backslash N_s$& 20 & 40 & 80 &160 &320 &640 &1280
		\\ 	\hline 	
  20&  	 2 (3.50)     & 	 2 (4.00)     & 	 4 (4.00)      & 	 8 (3.25)     & 	 17 (3.41)    & 	 34 (3.21)  &    68 (3.19)  \\ \hline
  40&  	 2 (4.00)     & 	 2 (4.00)     & 	 4 (4.25)      & 	 8 (3.50)     & 	 17 (3.65)    & 	 34 (3.44)  &   68 (3.31)	\\ \hline
  80&  	 2 (4.00)     & 	 2 (4.00)     & 	 4 (4.25)      & 	 8 (3.88)     & 	 17 (3.94)    & 	 34 (3.62)  &   68 (3.41)	\\ \hline
  160& 2 (4.00)         & 	 2 (4.00)     & 	 4 (4.25)      & 	 8 (4.00)     & 	 17 (4.06)    & 	 34 (3.68)  &    68 (3.51)  \\ \hline
  320& 2 (4.00)         & 	 2 (4.00)     & 	 4 (4.25)      & 	 8 (4.00)     & 	 17 (4.12)    & 	 34 (3.76)  &    68 (3.53)  \\ \hline
  640& 2 (4.00)         & 	 2 (4.00)     & 	 4 (4.50)      & 	 8 (4.12)     & 	 17 (4.18)    &  34 (3.82)      &    68 (3.57)  \\ \hline
  1280&  2 (4.00)       & 	 2 (4.00)     & 	 4 (4.50)      & 	 8 (4.25)     & 	 17 (4.24)    &  34 (3.82)      &    68 (3.59)  \\ \hline
	\end{tabular}
	\label{Ex1_Table2}
\end{table}
 \begin{figure}[htp!]
	\begin{center}
		\includegraphics[width=0.9\textwidth]{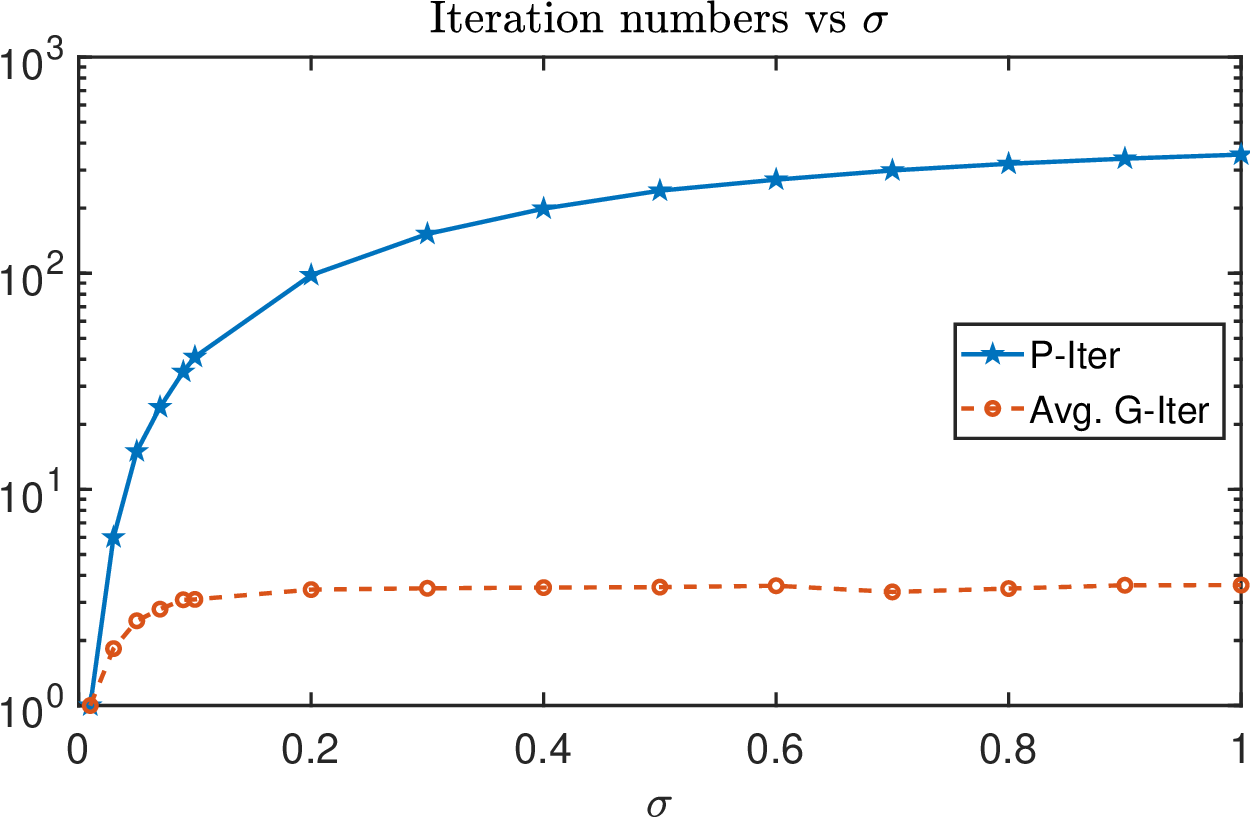}
	\end{center}
	\caption{Example 1: PinT-Policy iteration numbers verse $\sigma$ (with $N_s=1280, N_t=80$). }
	
	\label{FigEx1}
\end{figure}

In Figure \ref{FigEx1}, we plot the PinT-Policy iteration numbers verse different values of $\sigma\in [0.01,1]$ while fixing the other parameters. Clearly, the outer policy iteration number grows mildly, while the inner average preconditioned GMRES iteration number (about 4) seems to very robust whenever $\sigma\ge 0.1$.

\subsection{Example 2: 2D American put spread options}
In this example, we consider the two-asset B-S American spread option model given by
\begin{equation}
	\begin{split}
		\mathcal{L}V({\bm x},t) &= -\frac{\partial V(s,v,t)}{\partial t} - \frac{1}{2}\sigma^{2}_1s^2\frac{\partial^2 V(s,v,t)}{\partial s^2} - \frac{1}{2}\sigma^{2}_2v^2\frac{\partial^2 V(s,v,t)}{\partial v^2}~- \\
		&\quad\ \rho\sigma_1 \sigma_2sv\frac{\partial^2 V(s,v,t)}{\partial s\partial v} 
		- rs\frac{\partial V(s,v,t)}{\partial s} - rv\frac{\partial V(s,v,t)}{\partial v} + rV(s,v,t)\\
		&:= -\frac{\partial V(s,v,t)}{\partial t} - \mathcal{L}_{S}V(s,v,t)
	\end{split}
\end{equation}
for $s,v > 0, t\in[0,T)$. Where $\sigma_i$ ($i = 1,2)$ are the deterministic local volatility of each asset, $r$ is the risk-free interest rate, 
$\rho\in [-1, 0)\cup(0, 1]$ is the correlation of two underlying
assets. Under the two-asset B-S model, boundary conditions for American spread put options
\begin{eqnarray*}
	u(0,v,t) = \max\{Ke^{-rt}+v,0\} = 0,& u(s,0,t) = \max\{Ke^{-rt}-s,0\},\\
	u(s_{max},v,t) = \max\{Ke^{-rt}-(s_{max}-v),0\},&
	u(s,v_{max},t) = \max\{Ke^{-rt} - (s - v_{max}),0\}
\end{eqnarray*}
are posed according to Ref. \cite{heidarpour2018}. Moreover, there are some other kinds of options and BCs introduced in \cite{Chen2017}, but our proposed method can similarly deal with them and thus we shall not pursue that here. We choose the model parameters as suggested in \cite{heidarpour2018}:
$$T = \frac{122}{365},~K=25,~\sigma_1=0.35,~\sigma_2=0.38,~r=0.035,~ \rho=0.6,s_{\max}=v_{\max}=300.$$

Figure \ref{FigEx1surface} plots the typical computed solution surface at the final time $T$, where the spot point $(s^*,v^*)=(127.68,99.43)$ is marked out.
In Table \ref{Ex2_Table}, we report the errors and iteration numbers of the original direct Policy iteration and our proposed PinT-Policy iteration with respect to increasing $N_t$ while fixing $N_s$. We observe the numbers of direct Policy iteration are independent of $N_t$, which indicates the strategy of coupling all time steps into an all-at-once large LCP is practical. The inner total preconditioned GMRES iteration number also demonstrates a very robust convergence rate, which illustrates the effectiveness of our proposed PinT preconditioner.
To reduce the computational cost of Step-(b), we only approximately solve the linear systems in Step-(b) by one geometric multgrid V-cycle with the ILU smoother, which can greatly speed up the CPU times as reported in Table \ref{Ex2_Table}. For example, with $N_t=160$, the CPU time reduces from 564 seconds to 10 seconds. Without fast solvers for Step-(b), the speedup will be less dramatic in serial implementation, as shown in the next example.
 \begin{figure}[htp!]
	\begin{center}
		\includegraphics[width=0.9\textwidth]{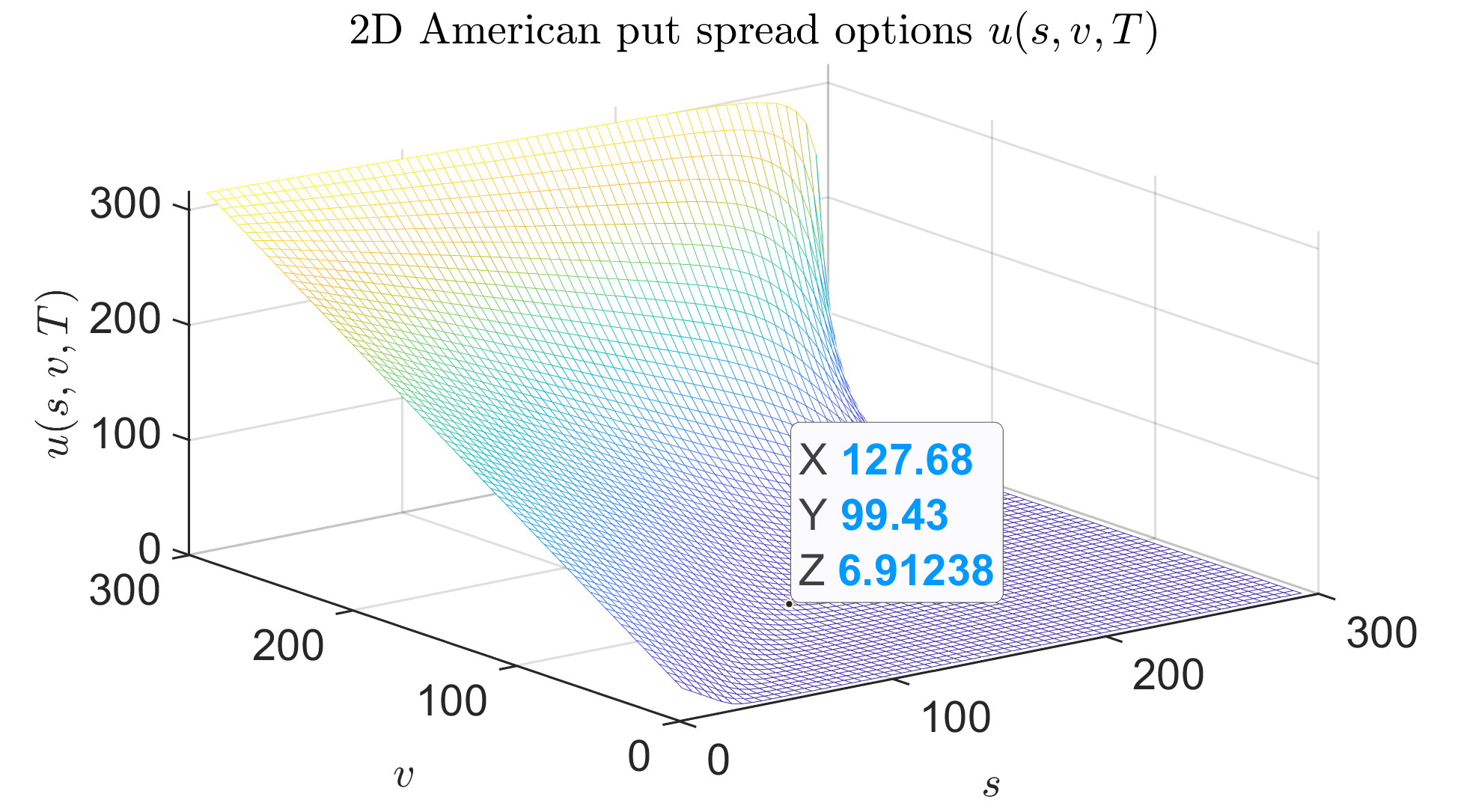} 
	\end{center}
	\caption{2D American put spread option value at $T$ (with $N_s=64\times 64, N_t=80$). }
	\label{FigEx2surface}
\end{figure}

 \begin{table}[htp!]
 	\centering
 	\caption{Numerical results for Example 2 ($N_s=N_v=64$).}
 	\begin{tabular}{|c|cc|cc|ccc|}\hline
 		&&&\multicolumn{2}{c|}{Policy iteration}  &\multicolumn{3}{c|}{PinT-Policy iteration}
 		\\ 	\hline 	
 		$N_t$ & $u(T,s^*,v^*)$ & Error & P-Iter & CPU &  P-Iter & G-Iter & CPU \\ \hline
 	 10& 	 6.820604&	 1.12e-01&	      5 &     2.77&	            5&  51&         0.72 	\\ \hline
 	 20& 	 6.872221&	 6.07e-02&	      5 &     10.26&	        6&  52&         1.27 	\\ \hline
 	 40& 	 6.898814&	 3.41e-02&	      6 &     45.61&	        6&  57&         2.61 	\\ \hline
 	 80& 	 6.912376&	 2.05e-02&	      6 &     163.87&           6&  59&         5.28 	\\ \hline
 	 160& 	 6.919245&	 1.36e-02&	      8  &    564.10&           6&  60&       10.21      \\ \hline
 	 320& 	 6.922706&	 1.02e-02&	      --  &     --    &           6&  61&       20.61      \\ \hline
 	 640& 	 6.924444&	 8.43e-03&	      -- &      --   &           6&  61&      42.57 	    \\ \hline
 		\hline
 		Ref.\cite{heidarpour2018} & 6.932875 &&&&&&\\
 		\hline
 	\end{tabular}
 	\label{Ex2_Table}
 \end{table}

\subsection{Example 3. 2D Heston Model.}
In Heston model, the stock price process $S_t$ and the variance process $V_t$ are defined by the specific stochastic differential equations. Let $V(s,v,t)$ be the value of an American option under the Heston model with striking price $K$, the corresponding HJB equation (\ref{eq1.1}) enjoys the following component
\begin{equation}
\begin{split}
\mathcal{L}V({\bm x},t) & = -\frac{\partial V(s,v,t)}{\partial t} - \frac{1}{2}vs^2\frac{\partial^2 V(s,v,t)}{\partial s^2} - \frac{1}{2}\sigma^{2}v\frac{\partial^2 V(s,v,t)}{\partial v^2} - \rho\sigma sv\frac{\partial^2 V(s,v,t)}{\partial s\partial v} \\
&\quad\ - rs\frac{\partial V(s,v,t)}{\partial s} - \kappa(\eta - v)\frac{\partial V(s,v,t)}{\partial v} + rV(s,v,t)\\
& := -\frac{\partial V(s,v,t)}{\partial t} - \mathcal{L}_{H}V(s,v,t).
\label{eq1.5x}
\end{split}
\end{equation}
We deal with boundary conditions of Dirichlet and Neumann type, determined by the specific option under consideration, or no condition, in the case of a degenerate boundary. For a vanilla American put option, the following boundary conditions are common in the literature.
\begin{equation}
\begin{cases}
u(0,v,t) = K, \\
u_s(s_{max},v,t) = 0,\\
u_v(s,v_{max},t) = 0. \\ 
\end{cases} 
\end{equation}
Note the degeneracy feature of Heston operator (\ref{eq1.5x}) in the $v$-direction since all second-order derivatives vanish and the operator becomes convection-dominated for $v\downarrow 0$. Hence, at $v=0$, it is assumed the Heston LCP (\ref{eq1.1}) is fulfilled \cite{Haentjens15}. We select the model parameters as suggested in \cite{Haentjens15}
$$T=0.25,~K=10,~\sigma=0.9,~r=0.1,~\kappa=5,~\eta=0.16,~\rho=0.1, s_{\max}=20, v_{\max}=1.$$
Figure \ref{FigEx3surface} plots the typical computed solution surface at the final time $T$, where the spot point $(K,v_0)=(10,0.25)$ is marked out.
In Table \ref{Ex3_Table}, we report the errors and iteration numbers of the original direct Policy iteration and our proposed PinT-Policy iteration with respect to increasing $N_t$ while fixing $N_s$. Again, we observe the numbers of direct Policy iteration are independent of $N_t$, and the inner total preconditioned GMRES iteration number indeed demonstrates a very mild increase, which verifies the effectiveness of our proposed PinT preconditioner.
We point out the spatial differential operator in Heston model leads to indefinite linear systems in Step-(b) and hence the geometric multigrid solver can not be directly applied. Hence, in this case we observe only marginal speed up in serial implementation. Nevertheless, we believe the speed up will become more plausible in parallel computation since Step-(b) contributes about 90\% of the total CPU times.
\begin{figure}[htp!]
	\begin{center}
		\includegraphics[width=0.9\textwidth]{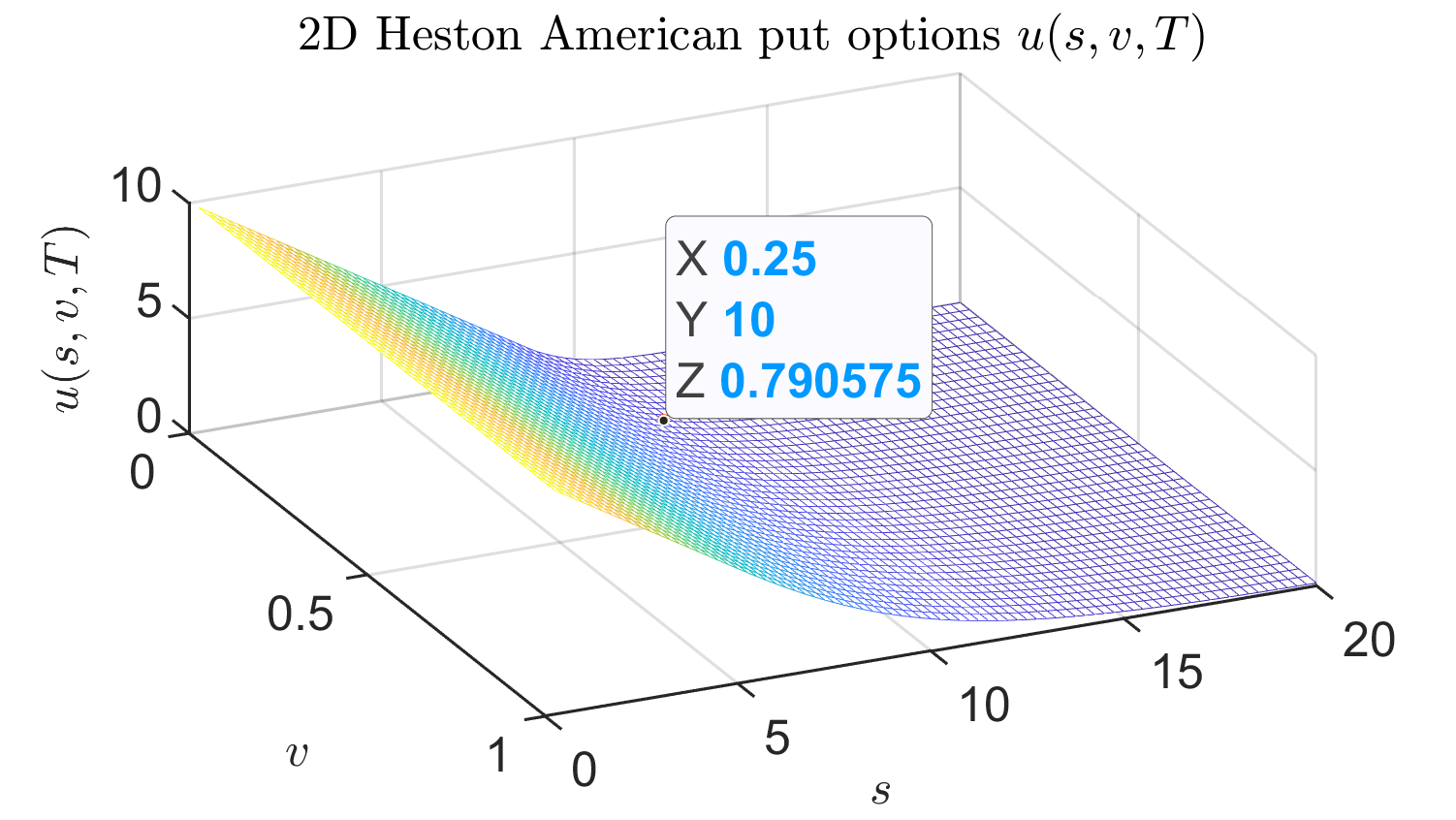} 
	\end{center}
	\caption{2D Heston American put option value at $T$ (with $N_s=80, N_v=40, N_t=40$). }
	\label{FigEx3surface}
\end{figure}

\begin{table}[!htp]
	\centering
	\caption{Numerical results for Example 3 ($N_s=80, N_v=40$, $v_0=0.25$). 
	}
	\begin{tabular}{|c|cc|cc|ccc|}\hline
		&&&\multicolumn{2}{c|}{Policy iteration}  &\multicolumn{3}{c|}{PinT-Policy iteration}
		\\ 	\hline 	
		$N_t$ & $u(K,v_0,T)$ & Error & P-Iter & CPU &  P-Iter & G-Iter & CPU \\ \hline
		10& 	0.780152&	1.58e-02&    8  &	 4.32&	      8 &89 &	     4.47 	  \\\hline
		20& 	0.786990&	8.98e-03&    8  &	 17.12&	      8 &89 &	     8.32	  \\\hline
		40& 	0.790575&	5.39e-03&    8  &	 57.81&	      8 &93 &	      23.66	  \\\hline
		80& 	0.792432&	 3.54e-03&    8  &	 184.76&	   8 &96 &     85.36   \\\hline
		160& 	0.793386&	 2.58e-03&    8  &	 566.13&	   8 &96 &     311.16   \\\hline
		\hline
		Ref.\cite{Haentjens15} & 0.795968 &&&&&&\\
		\hline
	\end{tabular}
	\label{Ex3_Table}
\end{table}


\section{Conclusion}
\label{sec5}
For pricing various American options based on different PDE
models, the conventional numerical approaches need to solve a sequence of linear complementarity problems (LCPs) at
each time step sequentially. One well-known way of solving such LCPs is to to reformulate them as HJB equations, which can be
then solved by the policy iteration with fast convergence rate. 
To utilizing the modern parallel computing power, we propose to solve an “all-at-once” form of
HJB equations simultaneously by the policy iteration, which can be accelerated by our carefully
designed parallel-in-time (PinT) preconditioners. Numerical examples are presented
to confirm the effectiveness of our proposed algorithms.
Although the numerical performance is very convincing, solid theoretical analysis for supporting our numerical observations is still widely open to the community. In particular, it is crucial to prove the policy iteration numbers are independent of the increasing $N_t$.
\subsection*{Acknowledgements}
The first author would like to thank Dr. Yong-Liang Zhao (Sichuan Normal University) for helping the numerical discretizations used in Example 3 of Section \ref{sec4}.

\subsection*{Funding} Xian-Ming Gu's research is supported by the Sichuan Science and Technology Program (2022ZYD0006) and the Guanghua Talent
Project of Southwestern University of Finance and Economics. 
\subsection*{Data Availibility} The developed codes are freely available upon reasonable request.
\subsection*{Declarations}
The authors have no relevant financial or non-financial interests to disclose.
\bibliographystyle{siamplain}
\bibliography{Refxx}
\end{document}